\PassOptionsToPackage{table}{xcolor}

\documentclass[10pt, a4paper, reqno]{amsart}

\usepackage[left=1.15in,right=1.15in,top=1.15in,bottom=1.15in]{geometry}

\usepackage{amsmath,amsthm}
\usepackage{fullpage}

\newtheorem{lemma}{Lemma}
\newtheorem{proposition}[lemma]{Proposition}
\newtheorem{theorem}[lemma]{Theorem}
\numberwithin{lemma}{section}

\theoremstyle{definition}
\newtheorem{definition}[lemma]{Definition}

\theoremstyle{definition}
\newtheorem{remark}[lemma]{Remark}

\theoremstyle{definition}

\theoremstyle{definition}
\newtheorem{example}[lemma]{Example}

\theoremstyle{theorem}

\theoremstyle{theorem}

\theoremstyle{theorem}

\author{Alexander Bastounis}
\address{University of Leicester,
	UK}
\email{ajb177@leicester.ac.uk}

\author{Felipe Cucker}
\address{City University of Hong Kong, 
	HONG KONG}
\email{macucker@gmail.com}

\author{Anders C. Hansen}
\address{University of Cambridge, UK}
\email{ach70@cam.ac.uk}

\usepackage{amssymb}

\numberwithin{equation}{section}

\usepackage{bbm}
\usepackage{enumitem}
\usepackage{times}
\usepackage{cases}

\usepackage{calc}
\usepackage{ifthen,tabularx,graphicx,multirow}
\graphicspath{ %
	{Figures/} %
	{Figures/Zoom} %
}
\usepackage{rotating}
\usepackage{hyperref}
\usepackage{tikz}

\usetikzlibrary{tikzmark}
\usepackage{ etoolbox}
\usepackage{tcolorbox}

\usepackage[algo2e]{algorithm2e}
\usepackage{cite}
\newcolumntype{Y}{>{\centering\arraybackslash}X}

\theoremstyle{definition}


\newcount\colveccount
\newcommand*\colvec[1]{
	\global\colveccount#1
	\begin{pmatrix}
		\colvecnext
	}
	\def\colvecnext#1{
		#1
		\global\advance\colveccount-1
		\ifnum\colveccount>0
		\\
		\expandafter\colvecnext
		\else
	\end{pmatrix}
	\fi
}
\makeatletter
\newcounter{framedeqn}
\AtBeginEnvironment{subequations}%
{\let\c@equation\c@framedeqn}
\makeatother

\newcommand{\real}{\mathbb{R}}
\newcommand{\nat}{\mathbb{N}}
\newcommand{\indic}{\mathbbm{1}}
\newcommand{\sg}{\mathsf{sgn}}
\newcommand{\sgn}{{\frak{s}}}
\newcommand{\argmin}{\mathop\mathrm{argmin}}

\newcommand{\supp}{\mathsf{supp}}

\newcommand{\rational}{\mathbb{Q}}

\newcommand{\gprob}{\Gamma^{\mathrm{ran}}}

\newcommand{\gprobh}{\Gamma^{\mathrm{ran},\mathrm{h}}}

\usepackage{amsfonts}
\usepackage{mathrsfs}
\usepackage{tikz}
\usepackage{float}
\usepackage{csquotes}

\newcommand{\condfsul}{\mathscr{C}_{\mathrm{UL}}}
\newcommand{\Sigmul}{\mathsf{\Sigma}_{\mathrm{UL}}}
\newcommand{\stabsupp}{\mathsf{stsp}}
\newcommand{\Sol}{\mathsf{Sol}}
\newcommand{\mUL}{\mathsf{Sol}^{\mathsf{UL}}}

\newcommand{\triple}[1]{[\![(#1)]\!]}

\allowdisplaybreaks
\raggedbottom
\newcommand{\orFindMatrix}{\mathsf{GetMatrix}}
\newcommand{\orFindVector}{\mathsf{GetVector}}
\newcommand{\orSigma}{\mathsf{Sigma}}

\newcommand{\orULasso}{\mathsf{ULasso}}
\newcommand{\orFSUL}{\mathsf{FSUL}}

\newcommand{\orPosDef}{\mathsf{PosDef}}

\newcommand{\transp}{^{\mathrm{T}}}

\newcommand{\Id}{\/\mathrm{I}}
\newcommand{\eproof}{\hfill\qed}

\def\Oh{\mathcal O}

\newcommand{\nMatSing}[2]{\|#1\|_{#2}}
\newcommand{\nMatAl}[2]{\left\|#1\right\|_{#2}}

\newcommand{\nmax}[1]{\|{#1}\|_{\max}}

\newcommand{\nTwoTwo}[1]{\nMatSing{#1}{2}}
\newcommand{\nTwoTwoAl}[1]{\nMatAl{#1}{2}}
\newcommand{\nTrMax}[1]{\triple{#1}_{\max}}
\newcommand{\nTrTwo}[1]{\triple{#1}_2}
\newcommand{\nTrOneStar}[1]{\triple{#1}_{\mathrm{S}}}

\linespread{1.2}

\title[A condition-based analysis of LASSO and generalised hardness of
approximation]{When can you trust feature selection? -- I:
A condition-based analysis of LASSO and generalised hardness of
approximation}

\newcommand{\mULMS}{\Xi^{\text{ms}}}
\newcommand{\Cond}{\mathsf{Cond}}
\begin{document}
	\maketitle

\begin{abstract}	
The arrival of AI techniques in computations, with the potential for hallucinations and
non-robustness, has made trustworthiness of
algorithms a focal point. However, trustworthiness of the many
classical approaches are not well understood. This is the case for feature selection, a classical problem in the
sciences, statistics, machine learning etc. Here, the LASSO
optimisation problem is standard. Despite its
widespread use, it has not been established when the output of
algorithms attempting to compute support sets of minimisers of LASSO
in order to do feature selection can be trusted. 
In this paper we establish how no (randomised) algorithm that works on all inputs can determine the correct support sets (with probability $> 1/2$)
of minimisers of LASSO when reading approximate input, regardless of
precision and computing power. However, we
define a LASSO condition number
and design an efficient algorithm for computing these support sets
provided the input data is well-posed (has finite condition number)
in time polynomial in the dimensions and logarithm of the condition number. 
For ill-posed inputs the algorithm runs forever, hence, it will never produce a wrong answer. 
Furthermore, the algorithm 
computes an upper bound for the condition number when this is
finite. Finally, for any algorithm defined on an open set containing a point with infinite condition number, 
 there is an input for which the algorithm will either run forever or produce a wrong answer. Our impossibility results stem from \emph{generalised hardness of approximation} --
within the Solvability Complexity Index (SCI) hierarchy framework --
that generalises the classical phenomenon of hardness of
approximation.
\end{abstract}

\section{Introduction}
In the wake of the many AI-based
algorithms 
throughout the society and the sciences, potentially yielding hallucinations and instabilities \cite{antun2020instabilities, jin17,
mccann2017convolutional, Choi,DezFaFr-17, hammernik2018learning, heaven2019deep, Anders2, Choi, Choi2}, the question of
trustworthiness of algorithms is now becoming a crucial topic. This is
particularly true for government regulators, where the European
Commission~\cite{EU_Commission_2020} has been particularly vocal about
its demand for trust in algorithms. However, with this new focus on
trust in algorithms comes an important question: Which of the
classical (non-AI-based) approaches are trustworthy?  For example, can
solutions to the \emph{unconstrained LASSO feature/model selection}
problem be trustworthily computed? See Remark \ref{rem:trust} for a discussion of what one means by a trustworthy algorithm. 
The now classical LASSO feature selection approach, initiated by
Tibshirani~\cite{LassoStart} is now standard in large parts of the
sciences and is defined as follows, see also \cite{SLSBook, Tibshirani_Book, Boyd, Juditsky_2012}.

\begin{definition}
For fixed $\lambda\in \rational$, $\lambda > 0$, the
\emph{unconstrained LASSO feature selection problem}
is the following: given input $\iota = (y,A)$ with $y \in \real^{m}$ and
$A \in \real^{m \times N}$, find the support of some vector in 
\begin{equation}\label{eq:UL}
   \mUL(b,U) := \argmin_{\hat{x}\in \real^{N}} \|A\hat{x}-y\|^2_2 + \lambda \|\hat{x}\|_1.
\end{equation}
The output set for input $(y,A)$ is therefore 
\begin{equation}\label{eq:LassoComp}
   \Xi(y,A) = \lbrace \supp(x) \, \vert \,
   x \in \argmin_{\hat{x} \in \real^{N}}\|A\hat{x}-y\|^2_2 
   + \lambda \|\hat{x}\|_1\rbrace
\end{equation}
and we have $\Xi(y,A)\subseteq \mathbb{B}^N$ where
$\mathbb{B}=\lbrace 0,1 \rbrace$. 
\end{definition}

\begin{remark}[Model of computation -- Inexact input]\label{rem:1}
In practice, when trying to compute an element of $\Xi(y,A)$
in~\eqref{eq:LassoComp}, we must assume that the $A$ and $y$ are given
inexactly. This is because either we have: (1) an irrational input; or
(2) the input is rational (for example $1/3$), but our computer
expresses numbers in a certain base (typically base-2); (3) the
computer uses floating-point arithmetic for which -- in many cases --
the common backward-error analysis (popularized by
Wilkinson~\cite{Wilkinson63}) translates the accumulation of round-off
in a computation into a single-perturbation of the input data. Hence,
in the sequel, we assume that algorithms access the input to
whatever finite precision desired and that all computational operations
are done exactly.
\end{remark}

The key questions we address in this paper are the following: 

\vspace{2mm}

\begin{displayquote}
\normalsize
{\it When do there exist algorithms that can compute a support set in
$\Xi(y,A)$, when $(y,A)$ are given inexactly (yet with arbitrary large
precision)? If trustworthy algorithms cannot exist for all inputs, when
can we guarantee trustworthy computations?}
\end{displayquote}

\vspace{2mm}

These questions touch on condition numbers, generalised hardness of
approximation and robust optimisation.

\begin{remark}[Trustworthiness of algorithms]\label{rem:trust}
By 'trustworthy algorithm' for a computational problem, we mean the following. If the computational problem takes only discrete values (as is the case when computing support sets of minimisers of optimisation problems), a trustworthy algorithm will always produce a correct answer -- if it halts. If the computational problem takes non-discrete values, a trustworthy algorithm produces an output -- if it halts -- that is at least $\epsilon \geq 0$ accurate in some predefined metric -- for any $\epsilon > 0$ given as input to the algorithm. 
\end{remark}

\subsection{Condition and trustworthiness} 
The phenomenon of small imprecisions on the input data leading to
substantial errors on the output of an algorithm is a classical
challenge in numerical analysis.  Indeed, this magnification of the
input error can in many cases be gauged by a notion
of \emph{condition}, which needs to be defined for each individual
problem. This is crucial in the developments of trustworthy algorithms
--- that is algorithms with guaranteed error bounds.  A {\em condition
number} $\Cond$ is a map from the space of inputs to the interval
$[0,\infty]$ that defines the sensitivity to small perturbations, with
high values indicating sensitive inputs.  Condition numbers were
introduced independently by Turing~\cite{Turing48} and von Neumann and
Goldstine~\cite{vNGo47} in a pair of papers widely considered the
birth certificate of contemporary numerical analysis. The goal was to
understand the effects of finite precision in linear system solving to
ensure trustworthy algorithms. Shortly after, condition numbers took
also a role in the computation of complexity
bounds. See~\cite[Overture]{Condition} for a global picture about
this.

\subsection{Generalised hardness of approximation (GHA) and robust
optimisation}

The program on robust optimisation, pioneered by A. Ben-Tal, L. El
Ghaoui \& Nemirovski~\cite{Nemirovski_robust, NemirovskiLRob,
Nemirovski_robust2} among others, provides a powerful mathematical
framework that addresses the challenge of optimisation given
inaccuracies and uncertainty in the input. This model, as argued in
Remark~\ref{rem:1}, is more realistic in view of real life
computations --- compared to models assuming exact computations ---
and aligns with S. Smale's call for ``[Computational] models which
process approximate inputs and which permit round-off computations''
in the list of problems for the 21st century~\cite{MathFrontiersPerspectives}.
Recent developments in
this area intersect with generalisations of the phenomenon of hardness
of approximation~\cite{Arora2007}, namely generalised hardness of
approximation (GHA), initiated in~\cite{opt_big} (see
also~\cite{CSBook, gazdag2022generalised, comp_stable_NN22, paradox22}
and Problem 5 (J. Lagarias) in~\cite{AIM} for further results on
GHA). GHA in optimisation is the phenomenon where one can easily
compute an $\epsilon$-approximation to a minimiser of the optimisation
problem when $\epsilon > \epsilon_0 > 0$, but for $\epsilon
< \epsilon_0$ (the approximation threshold) it suddenly becomes
impossible regardless of computing power and accuracy on the
input. This phase transition phenomenon was recently established
\cite{opt_big, AIM} for
computing minimisers of the LASSO problem~\eqref{eq:UL}, and our
impossibility results build on this framework and extend these
results. Robust optimisation is classically concerned with the problem
of approximating the optimal value of the objective function. However, a theory of robust
optimisation for computing minimisers will necessarily include the GHA
phenomenon.

\begin{remark}[{\bf Condition and GHA}]
 Condition numbers are well-known to often provide sufficient
conditions for the existence of trustworthy algorithms in
optimisation~\cite{Condition, felipe_cond_01, Renegar1, Renegar2, renegar1988polynomial,
renegar2001mathematical, Renegar96}.  J. Renegar's seminal work \cite{Renegar1, Renegar2, renegar1988polynomial,
renegar2001mathematical, Renegar96} on condition numbers in optimisation start with a definition of ill-posed problems, and then a natural condition number is 1/(distance to the closest ill-posed problem).  
New developments demonstrate how, in certain
situations, the necessity of having finite standard condition numbers
in order to obtain trustworthy algorithms is not needed. Indeed, the
theory of GHA~\cite{opt_big, CSBook, gazdag2022generalised,
comp_stable_NN22, paradox22} demonstrate examples of classes of
optimisation problems in the sciences that have infinite condition
numbers that are standard in optimisation, yet efficient and
trustworthy algorithms can handle the problems.

For linear programs and basis pursuit problems restricted to an input class $\Omega$ with inputs $\iota = (y,A)$, where $A \in \mathbb{R}^{m\times N}$ satisfies the robust nullspace condition \cite{CSBook} of order $s \geq 1$, with fixed parameters $\tau > 0$ and $\rho \in (0,1)$, and $y = Ax$, where $x$ is $s$-sparse, we have the following phenomenon \cite{opt_big}. There exists an algorithm that can compute approximate minimisers of the optimisation problem when $A$ and $y$ are given with inexact input. Moreover, such approximate minimisers can be computed in polynomial time in the number of variables and $\log(\epsilon^{-1})$ where $\epsilon > 0$ is the bound on the error of the approximate minimiser produced by the algorithm. As is standard in linear programming \cite{Condition}, the set of ill-conditioned problems is the collection of problems with several minimisers, yielding a condition number $\mathrm{Cond}$ for minimisers of linear programs. However, the set $\Omega$ contains input elements $\iota = (y,A)$ for which the condition number  $\mathrm{Cond}(\iota) = \infty$. In particular, there exist polynomial time algorithms for vast input classes in the sciences for which classical condition numbers are infinite. 

This suggests that the
marriage of condition theory and GHA provide a much more refined
theoretical framework designed to understand when trustworthy and
efficient algorithms can be designed.  This paper is the first step in
this direction.
\end{remark}

\section{Main Result}

Our paper continues the developments of condition in connection with
GHA and provides both upper and lower bounds. We define in
Section~\ref{section:ULResults} a condition number $\condfsul$ that
gauges the impact of numerical errors when using feature selection via
unconstrained LASSO. This quantity follows classical ideas of
condition as the inverse of the distance to ill-posedness (see for
instance~\cite{ChC02} for a close ancestor of $\condfsul$ for linear
programming).  We provide an alternative definition of $\condfsul$
based on the Karush-Kuhn-Tucker (KKT) conditions in
Section~\ref{section:alternativeCharacterisation}. Alongside the
condition of the data at hand, the cost of solving LASSO also depends
of the size of the data.  We will measure this size with the following
``truncated norms:''
\begin{equation*}
 \triple{b,U}_{\max} := \max \left\{\|U\|_{\max},\, \|b\|_{\infty},\, 1\right\}, \nTrOneStar{y,A}:= \max\left\{\sum_{i=1}^{m}\sum_{j=1}^{N} |A_{ij}|,\,
 \sum_{i=1}^{m}  |y_i|,\,1\right\}
\end{equation*}
where $\|U\|_{\max}=\max_{i,j}|U_{ij}|$ and $\|b\|_\infty =\max_{i}|b_i|$.

Our main result is the following (we will make the
meaning of ``variable-precision approximations'' precise in 
Section~\ref{sec:model}).

\begin{theorem}\label{thm:fsulcomp}
Consider the condition number $\condfsul(b,U)$ defined
in~\eqref{eq:CondDefinition}.
\begin{itemize}
\item[(1)]  We exhibit an algorithm $\Gamma$ which, for any input pair 
$(b,U)\in\real^m\times\real^{m\times N}$, reads variable-precision 
approximations of $(b,U)$. If $\condfsul(b,U) < \infty$
then the algorithm halts and returns a correct value in $\Xi(b,U)$.
The cost of
this computation is
\begin{equation*}
   \Oh\left\{N^{3}\left[\log_2\left(N^2 \triple{b,U}_{\max}^2
   \condfsul(b,U)\right)\right]^2\right\}
\end{equation*}
\label{item:fsulcomppos}
and the maximum number of digits the algorithm accesses is bounded by 
\[
\Oh(\left\lceil \log_2\left(\max\{\lambda + \lambda^{-1},N,\nTrOneStar{b,U},
\condfsul(b,U)\}\right)\right\rceil).
\]	
If, instead, $\condfsul(b,U)=\infty$ then the algorithm runs forever.

\item[(2)]
The condition number $\condfsul(b,U)$ can be estimated in the
following sense: There exists an algorithm that provides an upper bound
on $\condfsul(b,U)$, when it is finite, and runs forever when
$\condfsul(b,U) = \infty$.

\item[(3)]
If $\Omega\subseteq \real^m\times\real^{m\times N}$ is an open set and
there is a $(b,U)\in\Omega$ with $\condfsul(b,U) = \infty$ then there
is no algorithm that, for all  inputs $(y,A)\in\Omega$, computes an element
of $\Xi(y,A)$ given approximations to $(y,A)\in \Omega$.  Moreover, for any
randomised algorithm $\Gamma^{\mathrm{ran}}$ that always halts and any $p>1/2$, there exists
a $(y,A)\in\Omega$ and an approximate representation $(\tilde{y},\tilde{A})$ (see \S \ref{sec:SCI})
 of $(y,A)$ so that $\Gamma^{\mathrm{ran}}(\tilde{y},\tilde{A}) \notin \Xi(y,A)$ with probability
at least $p$.

 If $(b,U)\in\Omega$ is computable, then the failure point $(y,A)\in\Omega$ above can be made computable.  
\end{itemize}
\label{item:fsulcompneg}
\end{theorem}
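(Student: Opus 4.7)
The plan is to handle the three parts of Theorem~\ref{thm:fsulcomp} via a common thread: the KKT characterisation of $\condfsul$ promised in Section~\ref{section:alternativeCharacterisation} bridges the analytic ``inverse distance to ill-posedness'' definition with a quantitative certificate that can be verified in finite-precision arithmetic.

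For part~(1), I would design the algorithm as an outer loop over working precision $k$; at each $k$, the oracles $\orFindMatrix$ and $\orFindVector$ furnish rational approximations $(b_k,U_k)$ of $(b,U)$ within $2^{-k}$. I then run a polynomial-time convex solver (e.g.\ an interior-point method tailored to LASSO) on $(b_k,U_k)$ to produce a candidate minimiser $\hat x$ accurate to some $\epsilon_k \downarrow 0$, extracting a candidate support $S$ by thresholding small coordinates. The algorithm halts only when it verifies, in exact rational arithmetic, a KKT certificate at level $S$ with strict margin $\delta>0$: a feasible subgradient $s$ with $s_i=\operatorname{sgn}(\hat x_i)$ on $S$, $|s_j|\le 1-\delta$ off $S$, and $\|U_k\transp(U_k\hat x-b_k)+\lambda s\|_\infty$ smaller than a tolerance tied to $\delta$ and $\epsilon_k$. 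The assumption $\condfsul(b,U)<\infty$ guarantees, through the KKT characterisation, that such a $\delta\gtrsim 1/\condfsul(b,U)$ exists at the true minimiser, so the certificate is met for sufficiently large $k$; a standard bit-complexity analysis of the interior-point iterates then yields both the $\Oh\{N^3[\log_2(N^2\triple{b,U}_{\max}^2\condfsul(b,U))]^2\}$ running time and the $\Oh(\lceil\log_2(\max\{\lambda+\lambda^{-1},N,\nTrOneStar{b,U},\condfsul(b,U)\})\rceil)$ digit bound.

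Part~(2) then falls out almost for free: the verified margin $\delta$ produced by part~(1), combined with the norm data of $(b,U)$, inverts through the KKT formula for $\condfsul$ to give an explicit finite upper bound; if $\condfsul(b,U)=\infty$, no such $\delta$ ever materialises and the algorithm runs forever, as required. For part~(3), the starting point is $(b,U)\in\Omega$ with $\condfsul(b,U)=\infty$. By the inverse-distance definition, there is a sequence of ill-posed inputs in $\Omega$ converging to $(b,U)$, and at each such input arbitrarily small perturbations can select any of at least two distinct elements $S_1\ne S_2$ of $\Xi$. Using this, for every precision $k$ I would manufacture a pair of inputs in $\Omega$ whose $2^{-k}$-approximations coincide exactly yet whose $\Xi$-values are disjoint; a deterministic algorithm receiving those approximations reads identical data in both runs and therefore fails on at least one. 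The randomised bound with $p>1/2$ then invokes the probabilistic SCI machinery of \cite{opt_big, paradox22}: an always-halting $\gprob$ queries at most $K$ oracle bits with large probability, so a cluster of $M$ sibling inputs in $\Omega$ that are pairwise $2^{-K}$-close and have pairwise disjoint $\Xi$-values forces the failure probability on one of them to exceed $1-1/M$, beating any fixed $p>1/2$ for $M$ large. If $(b,U)$ is computable, every construction above can be carried out in $\rational^m\times\rational^{m\times N}$, yielding a computable failure point.

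The main obstacle will be the randomised lower bound in part~(3). The deterministic adversary is a clean topological argument and the upper bounds are a standard interior-point-plus-KKT package, but pushing past the $p=1/2$ threshold requires simultaneously (i) producing many pairwise-close yet $\Xi$-disjoint inputs inside an arbitrary open neighbourhood of an ill-posed point and (ii) certifying a uniform oracle-read budget for $\gprob$ on that cluster. Carrying out (i) around a point whose only known property is the non-uniqueness of LASSO support sets under perturbation, while respecting (ii) and maintaining computability, is where the real work of the proof lives.
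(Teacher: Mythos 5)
Your Part 1 architecture -- a precision loop feeding rational approximations to a solver, followed by a certificate check -- matches the shape of the paper's algorithm $\orFSUL$, but your certificate is incomplete in a way that would make the algorithm unsound. You verify only a KKT dual-slack margin on the off-support coordinates, which is the paper's $\sigma_1(y,A)$. The paper's termination test uses $\sigma(y,A)=\min\{\sigma_1,\sigma_2^2,\sigma_3\}$, where $\sigma_2$ is the smallest singular value of $A_S^*A_S$ and $\sigma_3$ is the smallest absolute value of a nonzero entry of the minimiser. A dual-slack margin alone can be large while $\sigma_3$ is tiny (a support coordinate is about to vanish under a perturbation smaller than the residual input uncertainty) or while $A_S^*A_S$ is nearly singular (the minimiser and hence its support are highly sensitive), so your check would pass and the algorithm would halt with a wrong support. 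The real work in Parts 1--2 is Propositions~\ref{proposition:rhofssigmaub} and~\ref{proposition:rhofssigmalb}, which bound $\stabsupp(y,A)$ above and below by explicit polynomials in $\sigma(y,A)$: the lower bound is what makes the certificate \emph{sound} (large $\sigma$ implies stable support past the remaining oracle error) and the upper bound is what makes it \emph{complete} (finite condition number implies $\sigma$ eventually clears the threshold). Your sketch has no analogue of the lower bound; the remark that $\delta\gtrsim 1/\condfsul(b,U)$ ``exists'' is exactly the missing lemma, not a consequence of anything you have set up. Separately, the paper solves the rational LASSO subproblem \emph{exactly} via a convex-QP solver, rather than to tolerance $\epsilon_k$ by interior-point, precisely so that the $\sigma$-check is performed on a genuine minimiser of the perturbed instance; running the check on an $\epsilon_k$-approximate minimiser would require chaining another error layer through Propositions~\ref{proposition:rhofssigmaub}--\ref{proposition:rhofssigmalb} that you have not accounted for.

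For Part 3, your deterministic argument is the right shape and roughly matches the paper's two-case split, though the paper needs Lemma~\ref{lemma:PeturbToGetEitherMin} (a diagonal rescaling $U\mapsto U(\Id-\delta E)$) to force the LASSO minimiser onto a chosen minimal-support solution -- ``arbitrarily small perturbations can select any of at least two elements of $\Xi$'' is the conclusion of that lemma, not a freebie. The bigger problem is the randomised half: your $M$-cluster mechanism requires $M$ inputs in $\Omega$ that are pairwise $2^{-K}$-close yet have pairwise disjoint $\Xi$-values, but the LASSO constructions available near an ill-posed point (via minimal-support solutions or a support change along a sequence) only produce \emph{two} candidate supports, and the tool the paper actually invokes (Proposition~\ref{prop:DrivingNegativeProposition}, imported from \cite{opt_big}) is a two-sequence adversary yielding failure probability $\ge 1/2 - p$ for any $p>0$. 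So the mechanism you propose to push past $1/2$ is not available here, and your proposal identifies this as the hard part without supplying an idea that would resolve it.
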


\begin{remark}[{\bf Our algorithm never produces wrong outputs}] 
In terms of trustworthiness -- which is the main topic of this paper -- our algorithm will never make a mistake. In the cases where it fails, it will simply run forever. Note that according to Theorem \ref{thm:fsulcomp}, this is optimal for any algorithm that will work on open sets of inputs, as the alternative to not halting is producing a wrong output when $\condfsul(b,U) = \infty$. 
\end{remark}

The complexity bound presented in Theorem~\ref{thm:fsulcomp} is
pleasantly low, exhibiting a cubic dependence on $N$ and
logarithmic dependence on both the size of the data and its condition
number. It is worth highlighting that (3) within
Theorem~\ref{thm:fsulcomp} implies that our defined condition number
effectively captures the challenging aspects of solving unconstraint
LASSO with open input sets. Indeed, open sets of inputs containing
elements with an infinite condition number inherently preclude the
application of reliable algorithms. Our current approach does not
provide a means to determine when the condition number becomes
infinite. Generally, it is widely believed that discerning
with finite precision whether a data has finite or infinite
condition number is, in most cases, not possible
(cf.~\cite{Renegar94}). The estimate of this condition number
is, for a broad class of problems, as challenging as solving the
underlying problem itself~\cite{ChC05}.
This insight was promptly recognized by von
Neumann and Goldstine, prompting them to address this issue in their
sequel~\cite{vNGo51} to~\cite{vNGo47}.

The proposed solution in that sequel, which has since become a commonly
followed approach, involves imbuing the data space with a probability
measure and deriving probabilistic estimates for the condition
number. These estimates, in turn, lead to probabilistic bounds on
complexity and accuracy. A sequel~\cite{BCH2} of our paper
proceeds along these lines.

\begin{remark}[{\bf Condition number and robust optimisation}] 
Both the fields of condition in optimisation and robust optimisation
share a common objective: ensuring stable and precise computations
even in the presence of imprecise input data. However, historically,
these two domains have often remained somewhat distinct within
mathematics.  A reason for this is that robust optimisation has
focused on computing the optimal value and not the minimisers.
As our negative results in Theorem~\ref{thm:fsulcomp} demonstrate,
providing a theory about properties of minimisers within robust
optimisation, necessarily involves condition numbers.  In particular,
our results illustrate how a theory of robust feature selection through
optimization inherently links robust optimization and the theory of
condition.
\end{remark}

\subsection{Connection to previous work} Below follows an account of
the connection to different areas and works that are crucial for the paper. 

\begin{itemize}[leftmargin=5pt]
\item[] \emph{Condition in optimisation:} Condition numbers in
computational mathematics and numerical analysis have been a
mainstay~\cite{Higham96, Cucker_Smale97} in order to secure
trustworthy algorithms that are accurate and stable. In optimisation
J. Renegar has pioneered the theory of condition
numbers~\cite{Renegar1, Renegar2, renegar1988polynomial,
renegar2001mathematical, Renegar96} both from the perspective of
stability and accuracy, but also from the perspective of efficiency of
algorithms. All of these perspectives are covered in detail
in~\cite{Condition}. In addition, we want to mention the work of
J. Pe{\~n}a~\cite{Pena2002, Pena2001} as well as D. Amelunxen,
M. Lotz, J. Walvin~\cite{Lotz2020}, and D. Amelunxen, M. Lotz,
M. McCoy, J. Tropp~\cite{Lotz2014} see also~\cite{ChCP07,
felipe_cond_01, ChC02}.
\vspace{1mm}

\item[] \emph{GHA and robust optimisation:}
GHA~\cite{opt_big, CSBook, gazdag2022generalised, comp_stable_NN22,
paradox22} is in spirit (although mathematically very different) close
to hardness of approximation in computer
science~\cite{Arora2007}. However, GHA in optimisation can be viewed
as a part of the program on robust optimisation (A. Ben-Tal, L. El
Ghaoui \& Nemirovski~\cite{Nemirovski_robust, NemirovskiLRob,
Nemirovski_robust2}) for computing minimisers. It is also a
part of the
greater program on the mathematics behind the Solvability Complexity
Index (SCI) hierarchy, see for example the work by J. Ben-Artzi,
M. Colbrook, M. Marletta~\cite{Hansen_JAMS, Ben_Artzi2022, SCI,
Matt1}.

\vspace{1mm}

\item[] \emph{Trustworthy algorithms and computer assisted proofs:}
Trustworthy algorithms in optimisation go beyond scientific computing
and have important implications in computer assisted proofs in
mathematics, where T. Hales' proof of Kepler's
conjecture~\cite{Hales1, Hales2} is a star example.  The intricate
computer assisted proof relies on computing around 50,000 linear
programs with irrational inputs, which leads to the crucial problem of
computing with inexact inputs. The reader may also want to
consult~\cite{AIM}, in particular Problem 2 (T. Hou) and Problem 5 (J. Lagarias) on  

a paper that discusses the tradition of
developing algorithms that are 100\% trustworthy and even suitable for
computer assisted proofs.

\vspace{1mm}

\item[] \emph{Algorithms for computing minimisers of LASSO:}  There is, of course, a
variety of algorithms suitable for the LASSO
problem. We refer to the review articles by Nesterov \& Nemirovski \cite{Nesterov_Nemirovski_Acta} and Chambolle \& Pock \cite{chambolle_pock_2016}, and the references therein for a more complete overview. See also the work by Beck \& Teboulle \cite{Fista} and Wright, Nowak \&  Figueiredo \cite{Mario_Lasso}, and the references therein, as well as \cite{Boyd, wright2022optimization, CSBook}. However, while these algorithms can compute approximations to the objective function, they cannot -- in general -- compute the support sets of minimisers of LASSO. 
\end{itemize}

\section{Numerical Examples -- Failure of modern algorithms}

\begin{table}
\begin{center}
\begin{tabular}{|c|c|c|c|c|}
\hline 
$\epsilon$ & $x_1$ & $x_2$ & Features computed &
Features correctly computed\\
\hline  \hline
$10^{-1}$ & $0$ & $0.95$ & $\{2\}$ & Yes\\\hline 
$10^{-2}$ & $0$ & $0.95$ & $\{2\}$ & Yes\\\hline 
$10^{-3}$ & $0$ & $0.95$ & $\{2\}$ & Yes\\\hline 
$10^{-4}$ & $0.45$ & $0.5001$ & $\{1,2\}$ & No\\\hline  
$10^{-5}$ & $0.9$ & $0.0500$ & $\{1,2\}$ & No\\\hline  
$10^{-6}$ & $0.945$ & $0$ & $\{1\}$ & No\\\hline
\end{tabular} 
\end{center}
\caption{The computation done in Example~\ref{Example:2DDeterministic}. Note
that there is an error for $\epsilon = 10^{-4},10^{-5}$ or
$10^{-6}$.
\label{table:SimpleDeterministicExample}}
\end{table}

Our first numerical example shows how this idea can work in practice.

\begin{example}\label{Example:2DDeterministic}
Assume that a dependent variable $b$ depends on two features $b_1,b_2$
in the following way: $b = u_1/(1-\epsilon) = u_2$ where the
parameter $\epsilon \in (0,1)$. In particular, $b$ is strongly
correlated with $u_2$ and this is the best predictor (using the lasso) for
$b$. Suppose that we observe $b,u_1$ and $u_2$ and obtain the
measurements $u = 1$, and
$U = \begin{pmatrix} u_1 &
  u_2 \end{pmatrix}=\begin{pmatrix} 
  1- \epsilon & 1 \end{pmatrix}$.
The LASSO problem with input
$(b,U)$ has a unique solution $x^*$ with $\supp(x^*) = \{2\}$
provided $\lambda < 2$.
We tested the accuracy of LASSO solvers under finite precision
by computing solutions
to the LASSO problem with $\lambda = 10^{-1}$ in the following
way: first, we used MATLAB's lasso routine to attempt to
find a minimiser. Next, we set any values of
this minimiser that were smaller
in magnitude than $10^{-2}$ to 0 (not doing so would systematically
lead to minimisers with full support). 
We then computed the support of the resulting vector $x$.
Table~\ref{table:SimpleDeterministicExample} presents the
results for the computed vector $x$ and its support.
For small values of $\epsilon$, the execution was
unable to identify the zero component of the true solution $x^*$ 
of the unconstrained LASSO problem. There is, of course, a
variety of algorithms suitable for the LASSO
problem~\cite{Chambolle_2011, chambolle_pock_2016}, however,
Theorem~\ref{thm:fsulcomp} is universal, and thus for any algorithm there will be inputs for which the algorithm will fail.
\end{example}

An obvious extension of Example~\ref{Example:2DDeterministic} is to
move from the deterministic inputs of
Example~\ref{Example:2DDeterministic} to inputs chosen
according to a random distribution. We thus look at a simple
random example, where we can once again compute by hand the
true solution and thus contrast with the output of the
algorithm. 

\begin{example}\label{Example:NDRandom}
Again we consider the single measurement case and set $b=1$, and $\lambda = 10^{-2}$ but this time we assume that we observe $N$
features, $u_1,\ldots,u_N$, which are randomly and independently
drawn according to either the exponential distribution with parameter $1$, the normal distribution with mean $1$ variance $10^{-4}$, and the uniform distribution on $(0,1)$. This purposefully simplified situation is considered because it is easy to derive the true solution to the
feature set induced by the solution of~\eqref{eq:UL}. We compare this answer (the ground truth) with the following procedure: we use Matlab's lasso routine to attempt to compute an element $x$ in $\mUL(b,U)$ and then set any values of $x$ larger in absolute value than a parameter `threshold' to 0 and consider the resulting vector's support.

 We do this over five hundred
randomly generated instances $U=(u_1,\ldots,u_N)$
for each choice of $N \in \{10,20,\dotsc,10010\}$
inclusive. The results are presented in
Figure~\ref{fig:ExampleNDRandomResult}. We see that
for large $N$, the algorithm is more accurate when data are
drawn from an exponential distribution instead of this normal
distribution and similarly the algorithm is more accurate when
data are drawn from a normal distribution instead of a uniform
one.
\end{example}

\begin{figure}
	\begin{center}
		\begin{tabular}{c c}
			\includegraphics[scale = 0.8]{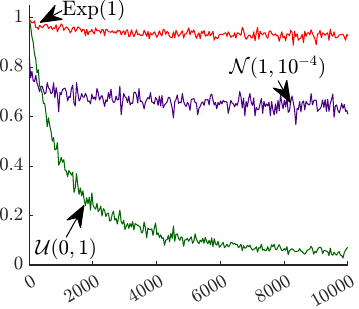} & \includegraphics[scale = 0.8]{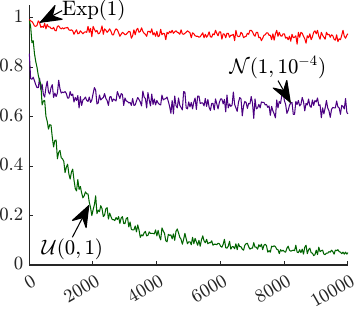} 
		\end{tabular} 
	\end{center}
	\caption{
		Outputs of the computation done in Example~\ref{Example:NDRandom}  -- where each entry of $U$ is IID  according to the distributions $\mathcal{U}(a,b)$ (uniform), $\mathrm{Exp}(\nu)$ (exponential) and $\mathcal{N}(\mu, \sigma^2)$ (normal). The task is to compute the support set of a LASSO minimiser i.e. an element in $\Xi(b,U)$ from \eqref{eq:LassoComp} with $\lambda = 10^{-2}$. The horizontal axis represents the dimension $N$. The vertical axis represents the success rate $\frac{\# \text{ of successes }}{\# \text{ of trials }}$ with threshold value -- see the text accompanying in Example \ref{Example:NDRandom} -- set to $10^{-3}$ and $10^{-12}$ for the left and right figures respectively.}
	\label{fig:ExampleNDRandomResult}
\end{figure}

\section{Preliminaries on the computational model}
\label{sec:model}

To talk about algorithms and complexity it is necessary to fix a
computational model. In our case, we can take either a BSS
machine~\cite{BSS89,BCSS98} or a Turing machine. The only assumption
we will do on these machines is that they can access arbitrarily
precise approximations to the input data (and that the machine
``knows" the precision of these approximations). It is clear that
otherwise, there would be no hope to return a correct output. It is
also clear that the cost of accessing one such approximation will have
to depend of the precision required. We next specify this.

We will assume that, for any pair $(b,U)\in\real^m\times\real^{m\times
N}$ we want to solve, the machine is given as input two procedures
$\orFindMatrix^U(~)$, $\orFindVector^b(~)$ taking themselves as input
a natural number $n \in \mathbb{N}$ which return some matrix $U^n$ and
vector $b^n$ respectively such that
\begin{equation}\label{eq:OracleGuarantee}
|U^{n}_{ij} - U_{ij}| \leq 2^{-n},\quad |b^n_i - b_i| \leq 2^{-n} \quad
\text{for } i=1,2,\dotsc,m \text{ and } j=1,2,\dotsc,N.
\end{equation}
The procedures  $\orFindMatrix^U(~)$, $\orFindVector^b(~)$ are
{\em black boxes} or {\em oracles}. We do not suppose any specific
implementation for them, merely their correctness.

Of course, for any fixed $(b,U)$ there are multiple choices of $b^n$
and $U^n$ satisfying~\eqref{eq:OracleGuarantee}. Crucially, the
algorithm must work for any such choice. That is to say, the
algorithm has to work with whichever choice of $b^n$ or $U^n$ the
oracles return, it can only rely on the fact that
$(b^n,U^n)$ approximates $(b,U)$ with a precision given
by~\eqref{eq:OracleGuarantee}.
But to be consistent with both the BSS and Turing
choices, we want $U^n$ and $b^n$ to have rational
entries\footnote{This consistency refers to our algorithm in
Theorem~\ref{thm:fsulcomp}(1) which can be thought either as a Turing
or a BSS algorithm as it works over rational numbers. We note, however,
that the complexity bound in the statement refers to the BSS setting. Also,
part~(3) of the theorem holds for both Turing and BSS algorithms.}.
And for complexity reasons (we will rely on existing algorithms for
convex quadratic programming that take rational inputs) we
also want to control the bit-size of the entries of $b^n$ and $U^n$.
We now observe that for any $x\in\real$ and $n\geq 1$, we can compute
a rational number $\tilde{x}$ such that $|\tilde{x}-x|\leq 2^{-n}$
with
\begin{equation}\label{eq:bitsize}
   \textrm{bit-size}(\tilde{x})\le \lceil\log_2(1+|x|)\rceil+n.
\end{equation}
Hence, we will strengthen assumption~\eqref{eq:OracleGuarantee} with
the bounds~\eqref{eq:bitsize} for the entries of $U^n_{ij}$ and $b^n_i$.

Furthermore, the computation of $\tilde{x}$ above can be done with a cost
of $\Oh(\lceil\log_2(1+|x|)\rceil+n)$ arithmetic operations. This
suggests a natural cost of $\Oh(mN(\log_2(\|U\|_{\max}+1)+n))$ 
for a call to $\orFindMatrix^U(n)$ and one of 
$\Oh(m(\log_2(\|b\|_{\infty}+1)+n))$ for a call to $\orFindVector^b(n)$.
We will therefore adopt these costs. 

We also make the assumption that $\lambda\in\rational$ and that the algorithm
can access the value of $\lambda$ exactly. This is because this parameter is
usually configurable by an end user working on the feature selection problem,
and can thus be chosen to be a rational number. This is not the case for $b$
and $U$ which are usually taken from real world datasets and may be
irrational. It goes without saying, at least in the Turing model, the bit-size
$p(\lambda)$ of $\lambda$ affects the cost of the computation. But for a fixed
$\lambda$ the bound in Theorem~\ref{thm:fsulcomp} holds. Nonetheless,
we will explicitly quantify this effect in Section~\ref{sec:proofs}.

In addition to $\triple{y,A}_{\max}$ we will also use the $\ell^p$-norm
$\|y\|_{p}$ of $y$, the operator norms 
$\|A \|_{qr} = \sup_{\|x\|_q = 1} \|Ax\|_r$  (writing $\|A\|_q$ when $q=r$),
and the additional `truncated norm' $\nTrTwo{y,A}:=\max \left\{\|A\|_2,\|y\|_2,1\right\}$.

\section{Standard facts about the unconstrained LASSO}
\label{section:BasicFactsAboutUL}

In this section we will describe some well-known facts about the
unconstrained LASSO selector.  We make no claims to novelty ---
instead, this section exists only to supplement the rest of the
article.

For a pair $(y,A)$, let $\mUL(y,A)$ be the set of solutions in $\real^N$ 
to unconstrained LASSO with input $(y,A)$. More
precisely, we set
\[
  \mUL(y,A) := \argmin\limits_{\hat{x}\in \real^{N}}
  \|A\hat{x}-y\|^2_2 + \lambda \|\hat{x}\|_1.
\]
Note that $\mUL(y,A)$ is a set and not necessarily single valued:
a priori, multiple
vectors may minimise the unconstrained LASSO functional. Then, we recall, 
$\Xi(y,A)=\{\supp(x)\mid x\in\mUL(y,A)\}$.

The following facts are well-known:
\begin{description}
\item[(UL1)] If $x \in \mUL(y,A)$ we must have
$\|Ax-y\|^2_2,\lambda \|x\|_1\leq \|Ax-y\|^2_2 +\lambda \|x\|_1\leq \|y\|^2_2$.
Thus $\mUL(y,A)$ can be written equivalently as 
\[
  \mUL(y,A) = \argmin_{{\hat{x}\in \real^{N},\,
  \|\hat{x}\|_1 \leq \lambda^{-1}\|y\|^2_2}} \|A\hat{x}-y\|^2_2
  + \lambda \|\hat{x}\|_1.
\]
For each fixed $(y,A)$, this is a minimisation problem of a continuous
function (of $\hat{x}$) over a non-empty compact region. Thus
$\mUL(y,A)$ is always non-empty and compact.  This situation
is different from that of linear programming, where there
exist inputs without optimal solutions.  \item[(UL2)] As the
objective function (that is to say, the function mapping
$\hat{x}$ to $\|A\hat{x}-y\|^2_2 + \lambda \|\hat{x}\|_1$) is
convex, the unconstrained LASSO problem is convex.
	
\item[(UL3)]
As mentioned before, it is not necessary that the unconstrained LASSO
problem has a unique minimiser. However, as the problem is
convex and the feasible region is non-empty, either there is a
unique minimiser or there are infinitely many
minimisers. Understanding uniqueness will prove to be
important to this paper and has been examined in detail
in~\cite{LASSOUNIQUE}.
	
\item[(UL4)] As argued
in~\cite{LASSOUNIQUE}, if $x^1,x^2 \in \mUL(y,A)$ then $Ax^1 =Ax^2$.
Thus $\|Ax^1-y\|^2_2 = \|Ax^2-y\|^2_2$ and so
$\|x^1\|_1
= \|x^2\|_1$. \label{fact:ULConservesTargetAndNorm}
	
\item[(UL5)\label{fact:ULKKTNecessarySufficient}]
The KKT conditions are both necessary and sufficient. That is, 
\[
x \in \mUL(y,A) \iff 
2A^*(Ax-y) =
-\lambda\, \sgn(x)
\]
where
\begin{equation*}
(\sgn(x))_i
\begin{cases}
  = 1 & \text{ if } x_i > 0\\
  = -1 & \text{ if } x_i < 0\\
  \in [-1,1] & \text{ if } x_i = 0.
\end{cases}
\end{equation*} 
In particular, combining this with (UL4) we see that $\sgn(x)$ is
constant over all $x \in \mUL(y,A)$.
\end{description} 

\begin{remark}
Note that $\sgn(x)$ is closely related to the sign function $\sg$,
but is multivalued when applied to $0$ whereas $\sg(0)$ is
traditionally defined to be $0$. In fact, $\sgn(x)$ is the
convex subdifferential of the $\ell^1$ norm applied at~$x$.
\end{remark}

\section{Conditioning of unconstrained LASSO}\label{section:ULResults}

It is sensible to define a quantity that measures how variations in
the input to a LASSO problem affect the support of a solution. We
thus define the \emph{stability support}.

\begin{definition}
The \emph{stability support} of a pair $(y,A)$ is defined as 
\begin{align*}
\stabsupp(y,A):=\inf \Big\lbrace& \delta \geq 0 \, \big\vert \, 
  \exists\,  \tilde{y} \in \real^{m},\tilde{A} \in \real^{m \times N},
  x\in \mUL(y,A), \mbox{ and } \tilde{x} \in \mUL(\tilde{y},\tilde{A})\\ 
  &\!\!\mbox{ such that } \|\tilde{y} - y\|_{\infty}, \nmax{A - \tilde{A}}
  \leq \delta \mbox{ and }  \supp(x) \neq \supp(\tilde{x})\Big\rbrace.
\end{align*}
\end{definition} 
The stability support is therefore the {\em distance to support change}.  
If $\stabsupp(y,A)>0$ then there exists $S\in{\mathbb B}^N$ such that 
$\Xi(y,A)=\{S\}$. Furthermore, for all pairs $(y',A')$ in a ball
(w.r.t.~the max distance) of radius $\stabsupp(y,A)$ around $(y,A)$
we have $\Xi(y',A')=\{S\}$. If, instead, $\stabsupp(y,A)=0$ then there are
arbitrarily small perturbations of $(y,A)$ which yield LASSO solutions with
different support. 

The notion of stability support leads to a natural definition 
of condition for the unconstrained LASSO feature selection problem.

\begin{definition}
For an input $(y,A)$ to UL feature selection we define the {\em condition
number} $\condfsul(y,A)$ to be
\begin{equation}\label{eq:CondDefinition}
  \condfsul(y,A) =
  \begin{cases} 
   (\stabsupp(y,A))^{-1} & \text{if } \stabsupp(y,A) \neq 0\\
   \infty & \text{otherwise.}
  \end{cases}
\end{equation}
The set $\Sigmul:=\{(y,A)\mid \stabsupp(y,A)=0\}$ is the
{\em set of ill-posed inputs}. 
\end{definition}

\begin{remark}\label{rem:CondScaleInvariance}
For scale-invariant problems it is common to define condition as the
normalized inverse of the distance to ill-posedness. That is,
for a data $a$, as $\|a\|/d(a,\Sigma)$ where $\Sigma$ denotes
the set of ill-posed inputs, $\|~\|$ is some norm measuring
the size of the input data and $d$ is some distance metric,
usually the one induced by $\|~\|$  
(see~\cite[\S6.1]{Condition} for a discussion on this). The
fact that UL is not scale invariant explains why the condition
number $\condfsul$ is not normalized. To better understand
this issue it is worth considering a simple example. Let us
consider the input $(y,A) = (0,0)$. For any $\lambda > 0$, it
is easy to see that $\mUL(y,A) = \{0\}$. Moreover, if both
$\|y'-y\|_{\infty} =\|y'\|_{\infty} \leq \epsilon$ and
$\nmax{A'-A} =\nmax{A'}\leq \epsilon$ then
$\|A'0-A'y'\|_{\infty} =\|A'y'\|_{\infty} < \lambda$, provided
that $\epsilon \leq \sqrt{\lambda N^{-1}}$. Thus $0$ satisfies
the KKT conditions and so $0 \in \mUL(y',A')$. Since all LASSO
solutions have the same $\ell^1$ norm (by~(UL4)), we conclude
that $\mUL(y',A')=\{0\}$. Thus $\stabsupp(y,A) > 0$ and so
the input $(0,0)$ should be considered well-posed.  If we were
to define the condition number as a norm over a distance, we
would have that $\condfsul(y,A) = 0$. Such perfect
conditioning would be unhelpful in understanding how close the
data is to instability. Hence, we do not define the
condition number in the traditional way. We instead opt to
define $\condfsul$ as in~\eqref{eq:CondDefinition} so that the
condition of $(0,0)$ is both positive and finite.
\end{remark}

From the definition, it is immediately obvious that if
$(y,A) \notin \Sigmul$ then $\Xi(y,A)$ must be single valued. We
conclude this section by noting that this property transfers to
multivaluedness of $\mUL$. We will prove this result in
Section~\ref{sec:proofs}. 

\begin{proposition}\label{prop:goodSetOneSolution}
If $(y,A) \notin \Sigmul$ then $|\mUL(y,A)|=1$.
\end{proposition}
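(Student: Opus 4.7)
The plan is to argue by contradiction: assume $(y,A)\notin\Sigmul$ yet $|\mUL(y,A)|\geq 2$, and exhibit two distinct support sets in $\Xi(y,A)$. This closes the argument because the definition of $\stabsupp$ applied with $(\tilde y,\tilde A)=(y,A)$ and $\delta=0$ forces $\stabsupp(y,A)=0$ as soon as two elements of $\mUL(y,A)$ have different supports; so under our hypothesis $\Xi(y,A)$ must already be single-valued, and producing two elements of it is a contradiction.

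To set up, I pick $x^{1}\neq x^{2}$ in $\mUL(y,A)$. The case $\supp(x^{1})\neq\supp(x^{2})$ is immediate, so suppose $\supp(x^{1})=\supp(x^{2})=:S$. From (UL4) I obtain $Ax^{1}=Ax^{2}$ and $\|x^{1}\|_{1}=\|x^{2}\|_{1}$. From (UL5) the KKT vector $s:=-2\lambda^{-1}A\transp(Ax^{1}-y)$ is a common element of $\sgn(x^{1})$ and $\sgn(x^{2})$, forcing $s_i=\sg(x^{1}_i)=\sg(x^{2}_i)\in\{\pm 1\}$ for every $i\in S$: the two minimisers share a common sign pattern on their joint support.

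The core step is to walk along the affine line $x^{\alpha}:=(1-\alpha)x^{1}+\alpha x^{2}$ beyond $\alpha=1$ until the first coordinate in $S$ hits zero. Because $\sum_{i\in S}|x^{1}_i|=\sum_{i\in S}|x^{2}_i|$ while $x^{1}\neq x^{2}$ and the signs on $S$ coincide, some $i_0\in S$ must satisfy $|x^{1}_{i_0}|>|x^{2}_{i_0}|$, so the affine map $\alpha\mapsto x^{\alpha}_{i_0}$ vanishes at some $\alpha^{*}_{i_0}>1$. Let $\alpha_+$ be the minimum of all such roots over $i\in S$; then $\alpha_+\in(1,\infty)$ is well-defined, and on $[1,\alpha_+]$ the sign pattern on $S$ is preserved, giving $|x^{\alpha}_i|=(1-\alpha)|x^{1}_i|+\alpha|x^{2}_i|$ coordinate-wise on $S$. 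Summing over $i\in S$ and using $\|x^{1}\|_{1}=\|x^{2}\|_{1}$ gives $\|x^{\alpha}\|_{1}\equiv\|x^{1}\|_{1}$ on $[1,\alpha_+]$. Combined with $Ax^{\alpha}=Ax^{1}$, this shows $x^{\alpha_+}\in\mUL(y,A)$ while $\supp(x^{\alpha_+})\subsetneq S$, yielding the second element of $\Xi(y,A)$ needed to close the contradiction.

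The main obstacle is the sign-preservation step: one has to check coordinate by coordinate that $\sg(x^{\alpha}_i)$ stays equal to $s_i$ on $[1,\alpha_+)$, which is exactly what allows the $\ell^{1}$-norm to decompose as an \emph{affine} function of $\alpha$ and then to collapse to a constant via equal endpoint values. Everything else is book-keeping from (UL4)--(UL5) and the trivial $\delta=0$ consequence of the definition of $\stabsupp$.
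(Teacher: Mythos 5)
Your proof is correct but takes a genuinely different route from the paper's. The paper first proves Lemma~\ref{lemma:multipleSolutionsImpliesMS}, which invokes the Krein--Milman theorem to extract at least two extreme points of the compact convex set $\mUL(y,A)$, and then shows every extreme point lies in $\Sol^{\mathsf{ms}}(y,A)$ by taking the small perturbation $v=(1+\epsilon)x-\epsilon x'$; since two distinct minimal-support solutions automatically have distinct supports, Proposition~\ref{prop:goodSetOneSolution} follows as an immediate corollary. You bypass the extreme-point and minimal-support machinery entirely: starting from any two distinct minimisers $x^1\neq x^2$, you reduce at once to the case $\supp(x^1)=\supp(x^2)=:S$ using (UL4)--(UL5), note that the common sign pattern on $S$ together with $\|x^1\|_1=\|x^2\|_1$ forces some index $i_0$ with $|x^1_{i_0}|>|x^2_{i_0}|$, and then continue along the affine line through $x^1$ and $x^2$ past $\alpha=1$ until the first coordinate of $S$ vanishes, producing a third minimiser with strictly smaller support. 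Both arguments travel along the same line of minimisers and exploit the same algebraic facts (constant $A\hat x$, constant $\ell^1$-norm, shared sign pattern), but the paper takes an infinitesimal step away from an extreme point to contradict extremality, whereas you take a finite step to the boundary of the sign cone so a coordinate actually hits zero. Your route is more elementary and self-contained, avoiding Krein--Milman and the auxiliary definition \eqref{eq:def-min-supp}; the paper's route pays its way because Lemma~\ref{lemma:multipleSolutionsImpliesMS} is reused directly in the proof of Theorem~\ref{thm:fsulcomp}(3).
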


\section{An alternative characterisation of the condition number}
\label{section:alternativeCharacterisation}

In this section we present a different characterisations of
ill-posedness based on three values, $\sigma_1,\sigma_2,\sigma_3$, which
we can use to computationally approximate the condition number.

\begin{definition}\label{def:sigma1sigma2sigma3}
For a pair $(y,A)$, we write (with the convention that if $M$ is
non-invertible, $\nTwoTwo{M^{-1}}:=\infty$ and so $\nTwoTwo{M^{-1}}^{-1} = 0$),
\begin{align*}
\sigma_1(y,A)&:= \inf \lbrace t \, \vert \, \exists x \in \mUL(y,A)
\text{ with } 
\|A_{S^{\mathsf{c}}}^*(Ax-y)\|_{\infty}= \lambda/2 - t, S = \supp(x) \rbrace, \\
\sigma_2(y,A)&:=   \inf \lbrace \nTwoTwo{(A^*_SA_S)^{-1}}^{-1} \, \vert \,
\exists x \in \mUL(y,A) \text{ with } S = \supp(x) \rbrace, \\
\sigma_3(y,A)&:= \inf \lbrace t \, \vert \, \exists
i \in \lbrace 1,2,\dotsc,N \rbrace  
\text{ and } x \in \mUL(y,A) \text{ such that }  0 < |x_i| \leq t \rbrace.
\end{align*}
where, for the empty-set $\varnothing$, we interpret
$\|A^*_{\varnothing}(Ax-y)\|_{\infty} = 0$, we treat $A^*_\varnothing A_\varnothing$
as invertible with $\nTwoTwo{(A^*_\varnothing A_\varnothing)^{-1}}^{-1} = \infty$,
and we set $\inf \varnothing = \infty$. 
\end{definition}

To get an intuition of what these $\sigma$s gauge assume momentarily
that Proposition~\ref{prop:goodSetOneSolution} holds and let
$\mUL(y,A)=\{x\}$. Then $\sigma_1$ gauges how close the non-support
set of $x$ is to violating the KKT conditions; it is small if
$\|A^*_{S^{\mathsf{c}}}(Ax-y)\|_{\infty}$ is close to
$\lambda/2$. Similarly, $\sigma_2$ is small if $\nTwoTwo{(A^*_S
A_S)^{-1}}$ is large for the support set $S$ of $x$ (and thus
$A^*_SA_S$ is close to being non-invertible), and $\sigma_3$ is small
if $x$ has a small component on its support.

We combine each of $\sigma_1, \sigma_2$ and $\sigma_3$ into a single
quantity as follows,
\[
\sigma(y,A) := \min\left\{\sigma_1(y,A),\sigma_2(y,A)^2,\sigma_3(y,A)\right\}
\] 

At first glance, it might seem that computing $\sigma$ is difficult
owing to the infima in Definition~\ref{def:sigma1sigma2sigma3}. The
next proposition shows how these infima can be removed so that
$\sigma$ can be easily calculated given an $x \in \mUL(y,A)$.

\begin{proposition}\label{prop:sigma1MinimiserCompute}
Let $x \in \mUL(y,A)$ have support $S$. 
\begin{enumerate} 
\item \label{prop:sigma1MinimiserSgNZ}
If $\|A_{S^{\mathsf{c}}}^*(Ax-y)\|_{\infty} < \lambda/2$ and $A^*_SA_S$ is invertible, 
then $|\mUL(y,A)|=1$ and so
\begin{gather*}
\sigma_1(y,A)= \lambda/2 - \|A_{S^{\mathsf{c}}}^*(Ax-y)\|_{\infty}, \quad
\sigma_2(y,A)= \nTwoTwo{(A^*_SA_S)^{-1}}^{-1} \\
\sigma_3(y,A)= \inf \lbrace |x_i| \,\vert \, i \in S \rbrace.
\end{gather*}
\item 
If instead $\|A_{S^{\mathsf{c}}}^*(Ax-y)\|_{\infty} = \lambda/2$ or $A^*_SA_S$
is not invertible, then $\sigma(y,A) = 0$. \label{prop:sigma1MinimiserSgZ}
\end{enumerate}
\end{proposition}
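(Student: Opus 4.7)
The plan is to show that under the hypotheses of part~(\ref{prop:sigma1MinimiserSgNZ}) we in fact have $|\mUL(y,A)|=1$, so that the infima in Definition~\ref{def:sigma1sigma2sigma3} all range over a one-element set and the stated closed forms can be read off directly. Part~(\ref{prop:sigma1MinimiserSgZ}) will follow by exhibiting, in each of the two failure regimes, an element of the respective defining set for $\sigma_1$ or $\sigma_2$ equal to $0$, together with the observation that all three $\sigma_i$ are nonnegative (the KKT identity from (UL5) forces $\|A_{S^{\mathsf{c}}}^*(Ax-y)\|_{\infty}\le\lambda/2$ for every minimiser, and the stated convention makes $\nTwoTwo{(A_S^*A_S)^{-1}}^{-1}\ge 0$ always).

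For the uniqueness step, let $\tilde{x}\in\mUL(y,A)$ be arbitrary. By (UL4) we have $A\tilde{x}=Ax$, hence $A^*(A\tilde{x}-y)=A^*(Ax-y)$. The KKT condition from (UL5) applied to $x$ gives $\lambda\sgn(x)_i=-2A_i^*(Ax-y)$ for all $i$, so the hypothesis $\|A_{S^{\mathsf{c}}}^*(Ax-y)\|_{\infty}<\lambda/2$ yields $|\sgn(x)_i|<1$ for every $i\in S^{\mathsf{c}}$. Since (UL5) also states that $\sgn$ is constant on $\mUL(y,A)$, we have $|\sgn(\tilde{x})_i|<1$ for $i\in S^{\mathsf{c}}$, which is incompatible with $\tilde{x}_i\ne0$; hence $\supp(\tilde{x})\subseteq S$. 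Restricting $A\tilde{x}=Ax$ to the columns indexed by $S$ gives $A_S(x_S-\tilde{x}_S)=0$, and invertibility of $A_S^*A_S$ means $A_S$ has full column rank, so $\tilde{x}=x$. Once $\mUL(y,A)=\{x\}$ has been established, each of the three infima defining $\sigma_1,\sigma_2,\sigma_3$ is taken over the single element $x$, immediately yielding the formulas in part~(\ref{prop:sigma1MinimiserSgNZ}) (with the convention $\inf\varnothing=\infty$ handling the edge case $S=\varnothing$ in the expression for $\sigma_3$).

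For part~(\ref{prop:sigma1MinimiserSgZ}): if $\|A_{S^{\mathsf{c}}}^*(Ax-y)\|_{\infty}=\lambda/2$ then $t=0$ lies in the set defining $\sigma_1(y,A)$, so $\sigma_1(y,A)=0$. If instead $A_S^*A_S$ is not invertible then the stated convention places $\nTwoTwo{(A_S^*A_S)^{-1}}^{-1}=0$ in the set defining $\sigma_2(y,A)$, so $\sigma_2(y,A)=0$. Either way $\sigma(y,A)=0$. The main obstacle is the uniqueness step in part~(\ref{prop:sigma1MinimiserSgNZ}), which requires combining both strands of KKT information: the strict inequality off $S$ rules out any expansion of the support of $\tilde{x}$, while full column rank of $A_S$ then pins $\tilde{x}$ down on $S$; the remainder of the argument is direct verification from the definitions.
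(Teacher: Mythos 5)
Your proof is correct and follows essentially the same route as the paper: use (UL4) to get $A\tilde{x}=Ax$, invoke the KKT condition to force $\supp(\tilde{x})\subseteq S$, then use invertibility of $A_S^*A_S$ (full column rank of $A_S$) to conclude $\tilde{x}=x$, after which the infima collapse; part~(2) is read off directly from the definitions. The only cosmetic difference is that you invoke the ``$\sgn$ is constant on $\mUL(y,A)$'' corollary of (UL5) rather than applying the KKT identity to $\tilde{x}$ directly, and you handle $S=\varnothing$ via the convention $\inf\varnothing=\infty$ instead of treating it as a separate case at the outset; both are equivalent to what the paper does.
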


We are now interested in the relation between $\sigma$ and $\stabsupp$.
The following proposition provides a bound from above from $\stabsupp$
in terms of $\sigma$.

\begin{proposition}\label{proposition:rhofssigmaub}
For $(y,A)\in\real^m\times\real^{m\times N}$, $\stabsupp(y,A)$ is bounded
above by $\sigma(y,A)$ as follows,
\begin{enumerate}[label=(\arabic*)]
\item 
For $\sigma_1(y,A) <  \lambda/4$, we have 
$\stabsupp(y,A) \leq \frac{4\nmax{A} \sigma_1(y,A)}{\lambda}$.  
\label{lemma:sigma1fscond}
\item 
We have $\stabsupp(y,A) \leq \sqrt{\sigma_2(y,A)}$.
\label{lemma:sigma2fscond}
\item 
We have $\stabsupp(y,A) \leq \nmax{A}\sigma_3(y,A) $.
\label{lemma:sigma3fscond}
\end{enumerate}
\end{proposition}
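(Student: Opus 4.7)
\medskip

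\noindent\textbf{Proof plan for Proposition~\ref{proposition:rhofssigmaub}.} My plan is to prove each of the three bounds by constructing, for arbitrary $\epsilon>0$, an explicit perturbation $(\tilde y,\tilde A)$ of size at most the claimed bound plus $O(\epsilon)$ together with minimisers $x\in\mUL(y,A)$ and $\tilde x\in\mUL(\tilde y,\tilde A)$ whose supports differ; letting $\epsilon\to 0$ then yields the stated inequality.

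\smallskip

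Part (3) is essentially by inspection. Given $x\in\mUL(y,A)$ and $i_0\in\supp(x)$ with $|x_{i_0}|\le\sigma_3(y,A)+\epsilon$, I take $\tilde A:=A$, $\tilde y:=y-A_{i_0}x_{i_0}$, and $\tilde x:=x-x_{i_0}e_{i_0}$. Because $A\tilde x-\tilde y=Ax-y$, the identity $-2A^*(A\tilde x-\tilde y)/\lambda=-2A^*(Ax-y)/\lambda$ still holds, and this vector lies in $\sgn(\tilde x)$: indeed $\sgn(\tilde x)_i=\sgn(x)_i$ for $i\ne i_0$, while at $i_0$ the subdifferential relaxes from $\sg(x_{i_0})\in\{\pm 1\}$ to $[-1,1]$, which still contains $\sg(x_{i_0})$. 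Thus $\tilde x\in\mUL(\tilde y,A)$ by (UL5), and $\|\tilde y-y\|_\infty\le|x_{i_0}|\nmax{A}\le(\sigma_3+\epsilon)\nmax{A}$.

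\smallskip

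For part (2), assume $\sigma_2<\infty$, pick $x\in\mUL(y,A)$ with support $S$ approximately realising $\sigma_2$, and let $v\in\real^{|S|}$ be a unit right-singular vector of $A_S$ corresponding to its smallest singular value, so that $\|A_Sv\|_2=\sigma_{\min}(A_S)\le\sqrt{\sigma_2+\epsilon}$. Define $\tilde A$ to agree with $A$ on columns indexed by $S^{\mathsf c}$, and set $\tilde A_j:=A_j-(A_Sv)v_j$ for $j\in S$; a direct computation gives $\tilde A_Sv=0$ and $\nmax{\tilde A-A}\le\|A_Sv\|_\infty\|v\|_\infty\le\sqrt{\sigma_2+\epsilon}$. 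I argue by contradiction that some minimiser of $\mUL(y,\tilde A)$ must have support different from $S$. If not, fix one with $\supp\tilde x=S$, let $\hat v\in\real^N$ denote the zero-extension of $v$, and consider $\tilde x(s):=\tilde x+s\hat v$. For small $|s|$ the data-fidelity term is constant since $\tilde A\hat v=0$, signs on $S$ are preserved, and the $\ell^1$-norm varies linearly as $s\langle v,\sg(\tilde x_S)\rangle$. Optimality of $\tilde x$ forces $\langle v,\sg(\tilde x_S)\rangle=0$, so the entire segment consists of minimisers of the perturbed problem; pushing $s$ to the first value $s^*$ at which some coordinate of $\tilde x_S+sv$ vanishes produces a minimiser with support strictly inside $S$, contradicting the standing assumption.

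\smallskip

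The main obstacle is part (1), which requires a more delicate KKT argument. Assume $\sigma_1<\lambda/4$, and select $x\in\mUL(y,A)$ of support $S$ together with $j_0\in S^{\mathsf c}$ approximately attaining the supremum, so that $|A_{j_0}^*(Ax-y)|\ge\lambda/2-\sigma_1-\epsilon$. Keep $\tilde y=y$ and rescale the $j_0$-th column, $\tilde A_{j_0}:=(1+\alpha)A_{j_0}$, with $\alpha:=4\sigma_1/\lambda+\eta$ for small $\eta>0$; then $\nmax{\tilde A-A}\le\alpha\nmax{A}$. The crux is to show that every $\tilde x\in\mUL(y,\tilde A)$ satisfies $\tilde x_{j_0}\ne 0$, so that $j_0\in\supp\tilde x\ne S$. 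Suppose instead that some minimiser $\tilde x$ has $\tilde x_{j_0}=0$. Then the column rescaling is invisible to the objective at $\tilde x$, so the perturbed and original LASSO objectives coincide at $\tilde x$; the same is true at $x$ since $x_{j_0}=0$. A short sandwich argument then forces the two minimum values to agree and $\tilde x\in\mUL(y,A)$, whence $A\tilde x=Ax$ by (UL4). The KKT inequality at $j_0$ for the perturbed problem reads $(1+\alpha)|A_{j_0}^*(Ax-y)|\le\lambda/2$, and combined with $|A_{j_0}^*(Ax-y)|\ge\lambda/2-\sigma_1-\epsilon$ this forces $\alpha\le 2(\sigma_1+\epsilon)/(\lambda-2\sigma_1-2\epsilon)$. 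A direct calculation shows $2\sigma_1/(\lambda-2\sigma_1)<4\sigma_1/\lambda$ precisely when $\sigma_1<\lambda/4$, so for $\eta,\epsilon$ sufficiently small the upper bound on $\alpha$ is strictly less than $4\sigma_1/\lambda+\eta=\alpha$, yielding the contradiction. Letting $\eta,\epsilon\to 0$ gives $\stabsupp(y,A)\le 4\nmax{A}\sigma_1/\lambda$.
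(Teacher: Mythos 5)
Your proof is correct and takes essentially the same approach as the paper: all three perturbations are identical to the ones used there (drop a small entry and shift $y$ for $\sigma_3$; rescale a non-support column for $\sigma_1$; rank-one deflation $A\mapsto A-A\hat v\hat v^{\mathrm T}$ for $\sigma_2$), and the KKT bookkeeping for parts (1) and (3) matches the paper's. The only mild difference is in part (2): the paper concludes by showing the perturbed matrix has $B_S^*B_S$ singular and then invokes a separately-proved lemma (that $\sigma_2=0$ forces ill-posedness), whereas you inline the underlying argument — walking along the kernel direction $\hat v$ with constant data-fidelity and linear $\ell^1$-norm until a support coordinate vanishes — which is the same mechanism as in that lemma's proof, just made explicit in place.
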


Similarly, a lower bound which makes use of the following polynomial
(defined on positive $\nu,\xi$)
\begin{equation}\label{eq:q}
q(\nu,\xi) := 96\nu^5+12\nu^3(1+\lambda \sqrt{N})\sqrt{\xi}
+ \xi \left(\frac{2\nu^3}{\lambda} + 3\nu\right)
\end{equation}
is given below.

\begin{proposition}\label{proposition:rhofssigmalb}
Set $\alpha = \nTrTwo{y,A}$ and 
$\sigma = \sigma(y,A)$. Then
\[
  \stabsupp(y,A) \geq (mN)^{-\frac{1}{2}}\min\left\{\frac{\sigma^2}
  {q(\alpha,\sigma)},
 \frac{\sqrt{\sigma}}{6\alpha},\alpha \right\}.
\]	
\end{proposition}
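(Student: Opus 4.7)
The case $\sigma(y,A)=0$ is trivial since the right-hand side of the claim vanishes; assume therefore $\sigma := \sigma(y,A) > 0$. Reading Proposition~\ref{prop:sigma1MinimiserCompute}(\ref{prop:sigma1MinimiserSgZ}) contrapositively shows $\|A_{S^c}^*(Ax-y)\|_\infty<\lambda/2$ and $A_S^*A_S$ invertible for every $x\in \mUL(y,A)$, so part~(\ref{prop:sigma1MinimiserSgNZ}) of the same proposition gives $\mUL(y,A)=\{x\}$, with support $S=\supp(x)$ and sign pattern $\mathbf{s}=\sgn(x)$, together with $\nTwoTwo{(A_S^*A_S)^{-1}}=\sigma_2^{-1}$, $\|A_{S^c}^*(Ax-y)\|_\infty=\lambda/2-\sigma_1$ and $\min_{i\in S}|x_i|=\sigma_3$. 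Denote the right-hand side of the claim by $R$. The plan is to show that for every $\delta'<R$ and every $(\tilde y,\tilde A)$ with $\max\{\|\tilde y-y\|_\infty,\|\tilde A-A\|_{\max}\}\leq \delta'$, there is a vector $\tilde x$ with $\supp(\tilde x)=S$ satisfying the strict KKT conditions at $(\tilde y,\tilde A)$; applying Proposition~\ref{prop:sigma1MinimiserCompute}(\ref{prop:sigma1MinimiserSgNZ}) at $(\tilde y,\tilde A)$ then forces $\mUL(\tilde y,\tilde A)=\{\tilde x\}$, so no perturbation within $\delta'$ can change the support, whence $\stabsupp(y,A)\geq R$.

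The candidate is the obvious one: set $\tilde x_{S^c}=0$ and, whenever $\tilde A_S^*\tilde A_S$ is invertible,
\[
  \tilde x_S := (\tilde A_S^*\tilde A_S)^{-1}\bigl(\tilde A_S^*\tilde y - \tfrac{\lambda}{2}\mathbf{s}_S\bigr).
\]
Three conditions must be verified. First, invertibility: the bound $\|\tilde A_S^*\tilde A_S - A_S^*A_S\|_2 \leq (2\|A\|_2+\|\tilde A-A\|_2)\|\tilde A-A\|_2 \leq 3\alpha\sqrt{mN}\,\delta'$ (using $\|\cdot\|_2\leq \sqrt{mN}\|\cdot\|_{\max}$) together with a Neumann-series argument based on $\|(A_S^*A_S)^{-1}\|_2\leq \sigma_2^{-1}$ will give $\|(\tilde A_S^*\tilde A_S)^{-1}\|_2\leq 2/\sigma_2$ as soon as $3\alpha\sqrt{mN}\,\delta'\leq \sigma_2/2$. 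Using $\sigma_2\geq \sqrt\sigma$, this is the source of the $\sqrt\sigma/(6\alpha)$ branch in the statement.

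Second, sign preservation $\sgn(\tilde x_i)=\mathbf{s}_i$ for $i\in S$ follows from $\|\tilde x_S-x_S\|_\infty<\sigma_3$. Using the normal equation $A_S^*A_S x_S = A_S^* y - \tfrac{\lambda}{2}\mathbf{s}_S$ one expands
\[
  \tilde x_S - x_S = (\tilde A_S^*\tilde A_S)^{-1}\Bigl[(A_S^*A_S-\tilde A_S^*\tilde A_S)x_S + \tilde A_S^*(\tilde y-y) + (\tilde A_S^* - A_S^*)y\Bigr],
\]
each term of which is $\Oh(\alpha\sqrt{mN}\,\delta')$ against a factor that is either $\alpha$ or $\|x_S\|_2$; the latter is bounded by $(\|A_S^*y\|_2+\tfrac{\lambda}{2}\sqrt N)/\sigma_2 = \Oh((\alpha^2+\lambda\sqrt N)/\sigma_2)$ via the same normal equation. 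Third, the off-support residual $\|\tilde A_{S^c}^*(\tilde A_S\tilde x_S-\tilde y)\|_\infty<\lambda/2$ is handled by telescoping $\tilde A_{S^c}^*(\tilde A_S\tilde x_S-\tilde y) - A_{S^c}^*(Ax-y)$ into perturbation pieces bounded by the same ingredients, and by comparing the result with the cushion $\sigma_1$. Substituting $\sigma_1,\sigma_3\geq \sigma$ and $\sigma_2\geq \sqrt\sigma$ to rewrite everything in $\sigma$ and $\alpha$ then produces the $\sigma^2/q(\alpha,\sigma)$ branch, with the polynomial $q$ in~\eqref{eq:q} arising from the bookkeeping: the $\alpha^5$ monomial is $\alpha^3\cdot\alpha^2$ (the $\|x_S\|_2$ bound times the perturbation of $A_S^*A_S$), the $(1+\lambda\sqrt N)$ factor tracks the $\mathbf{s}_S$ contribution, and the $1/\lambda$ piece appears when a $\lambda$-scaled factor is divided by $\sigma_2\geq \sqrt\sigma$.

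The hard part will be precisely this final bookkeeping: every individual inequality is elementary (operator-norm inequalities, Neumann series, the KKT normal equation), but the several error terms in the expansion of $\tilde x_S-x_S$ and in the $S^c$-residual each contribute a different monomial in $\alpha$ and $\sqrt\sigma$, and arranging them so that the combined polynomial is exactly $q(\nu,\xi)$ as in~\eqref{eq:q} rather than some larger upper bound is the delicate step. The $\alpha$-branch of the minimum is a sanity bound that permits the crude estimate $\|\tilde A\|_2\leq 2\|A\|_2\leq 2\alpha\sqrt{mN}$ used throughout; it is included to prevent the perturbation from dominating the original data.
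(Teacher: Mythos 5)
Your plan is the same as the paper's: assume $\sigma>0$, use Proposition~\ref{prop:sigma1MinimiserCompute} to get uniqueness of the minimiser $x$ with support $S$, take the explicit candidate $\tilde x_S=(\tilde A_S^*\tilde A_S)^{-1}(\tilde A_S^*\tilde y-\tfrac{\lambda}{2}\mathbf{s}_S)$, $\tilde x_{S^{\mathsf{c}}}=0$, and verify the three KKT-driven conditions (invertibility by a Neumann series, sign preservation on $S$, strict off-support residual), substituting $\sigma_1,\sigma_3\geq\sigma$ and $\sigma_2\geq\sqrt{\sigma}$ at the end. This matches the paper's construction exactly, including the identification of the $\sqrt{\sigma}/(6\alpha)$ branch as the Neumann-series threshold and the $\alpha$ branch as the cushion that keeps $\|\tilde A\|_2\leq 2\alpha$.

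However, the argument stops short of a proof. The chain of norm estimates producing the specific polynomial $q(\alpha,\sigma)$ — which you yourself label ``the hard part'' and ``the delicate step'' — is left entirely unverified. Because the exact form of $q$ is reused verbatim in Lemma~\ref{lemma:CorrectnessOfAlgorithm} (the function $f(\xi)=\xi^2-2\delta\sqrt{mN}q(\nu,\xi)$ and the threshold $C=6\delta^{1/4}N(\lambda+\lambda^{-1})H^2$ are tuned to the constants in $q$), showing that the errors are bounded by \emph{some} polynomial is not sufficient; the bookkeeping is the theorem, not a detail one may wave at. Two smaller points: your attribution of the $2\nu^3/\lambda$ monomial to ``a $\lambda$-scaled factor divided by $\sigma_2$'' is off — in the paper it arises from bounding $\|x\|_2\leq\|x\|_1\leq\|y\|_2^2/\lambda$ using (UL1) when controlling $\nTwoTwo{B-A}\,\|x\|_2$ in the off-support residual — and the degenerate cases $S=\varnothing$ (closed-form formula is vacuous; one must take $\tilde x=0$) and $S^{\mathsf{c}}=\varnothing$ (no off-support residual) are not treated, though the paper shows both require a separate, if brief, argument.
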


Propositions~\ref{prop:sigma1MinimiserCompute}
and~\ref{proposition:rhofssigmaub} allow us to compute upper bounds
for the condition number $\condfsul(y,A)$, whilst
Proposition~\ref{proposition:rhofssigmalb} ensures that these
estimates are accurate. There is also another consequence of
these results: they allow us to provide an alternative definition for
the condition number.

\begin{definition}
For $(y,A)\in\real^m\times\real^{m\times N}$, $(y,A)$ is said to be
\emph{$\sigma$-ill-posed} for the UL feature selection problem
if $\sigma(y,A) = 0$. 
\end{definition}

Propositions~\ref{proposition:rhofssigmaub}
and~\ref{proposition:rhofssigmalb} then show that the set of
$\sigma$-ill-posed problems is exactly the set $\Sigmul$. Thus an alternative
definition of the condition number for $(y,A)$ is to define it as the
reciprocal of the distance to the set of $\sigma$-ill-posed problems. Whilst
$\sigma$ is convenient from a computational perspective, it is more
intuitive to define condition in terms of $\stabsupp$.

\section{Proofs of the stated results}\label{sec:proofs}

\subsection{Proof of Proposition~\ref{prop:goodSetOneSolution}}

Before proving Proposition~\ref{prop:goodSetOneSolution}, we need to
introduce the concept of the set of solutions with minimal support. 

\begin{definition}
The set of {\em solutions with minimal support} of a pair $(y,A)$ is
\begin{equation}\label{eq:def-min-supp}
\Sol^{\mathsf{ms}}(y,A):= \lbrace
x \in \mUL(y,A) \, | \, \forall
x' \in \mUL(y,A), \, \supp(x') \subseteq \supp(x) \Rightarrow
x'=x \rbrace.
\end{equation}
\end{definition}

\begin{lemma}\label{lemma:multipleSolutionsImpliesMS}
If $|\mUL(y,A)|\neq 1$ then $|\Sol^{\mathsf{ms}}(y,A)|\geq 2$.
\end{lemma}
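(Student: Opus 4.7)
The plan is to identify $\mUL(y,A)$ with a bounded convex polytope and show that its extreme points sit inside $\Sol^{\mathsf{ms}}(y,A)$; since $|\mUL(y,A)|\ge 2$, the polytope will then have at least two vertices, and the conclusion follows. For the polyhedral description, I would use (UL4) and (UL5): the quantities $c:=Ax$, $L:=\|x\|_1$, and $s:=\sgn(x)$ are independent of the choice of $x\in\mUL(y,A)$. Partitioning $\{1,\dots,N\}$ into $I_+=\{i:s_i=1\}$, $I_-=\{i:s_i=-1\}$, and $I_0=\{i:|s_i|<1\}$, the KKT description in (UL5) forces any $x\in\mUL(y,A)$ to satisfy $x|_{I_0}=0$, $x|_{I_+}\ge 0$, and $x|_{I_-}\le 0$. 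Under these sign constraints $\|x\|_1=\langle s,x\rangle$, so $\|x\|_1=L$ becomes the linear equation $\langle s,x\rangle=L$. Conversely, any $x$ satisfying $Ax=c$, $\langle s,x\rangle=L$ and the sign/zero constraints attains the same LASSO objective as a fixed $\bar x\in\mUL(y,A)$ and hence belongs to $\mUL(y,A)$. This identifies $\mUL(y,A)$ with a polyhedron in $\real^N$, bounded because $|x_i|\le L$ for every $i\in I_+\cup I_-$.

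Next I would show that every vertex $v$ of this polytope lies in $\Sol^{\mathsf{ms}}(y,A)$. Suppose for contradiction that $v\notin\Sol^{\mathsf{ms}}(y,A)$ and pick $v'\in\mUL(y,A)$ with $\supp(v')\subseteq\supp(v)$ and $v'\ne v$. Define $w^{\pm}=v\pm\epsilon(v-v')$ for small $\epsilon>0$. Convexity yields $w^-=(1-\epsilon)v+\epsilon v'\in\mUL(y,A)$. For $w^+$, the conditions $Aw^+=c$, $\langle s,w^+\rangle=L$ and $w^+|_{I_0}=0$ hold by direct computation. For the sign constraints on $I_+$: if $v_i=0$, then the support-containment hypothesis gives $v'_i=0$, whence $w^+_i=0$; if $v_i>0$, then $w^+_i=(1+\epsilon)v_i-\epsilon v'_i>0$ for $\epsilon$ small enough. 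The case $i\in I_-$ is symmetric. Thus $w^+\in\mUL(y,A)$, and $v=\tfrac12(w^++w^-)$ lies at the midpoint of a non-degenerate segment in $\mUL(y,A)$, contradicting extremality of $v$.

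Finally, by Minkowski--Weyl any nonempty bounded polyhedron is the convex hull of its finite set of vertices, so a polytope containing at least two points has at least two distinct vertices. Since $\mUL(y,A)$ is nonempty by (UL1) and of size at least $2$ by hypothesis, the previous paragraph yields $|\Sol^{\mathsf{ms}}(y,A)|\ge 2$. The delicate point is the sign-preservation step: it is precisely the hypothesis $\supp(v')\subseteq\supp(v)$ that prevents $w^+_i=-\epsilon v'_i$ from flipping sign at a zero coordinate of $v$ in $I_+\cup I_-$, which would otherwise eject $w^+$ from the polytope. This is the mechanism through which minimal support on $v$ rules out any line-segment structure through $v$.
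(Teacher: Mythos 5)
Your proof is correct and follows essentially the same strategy as the paper: identify the extreme points of the solution set, show each extreme point lies in $\Sol^{\mathsf{ms}}(y,A)$ via the perturbation $(1+\epsilon)v-\epsilon v'$ (the support-containment hypothesis preventing any sign-flip), and conclude there are at least two of them. The only difference is cosmetic: the paper works directly with $\mUL(y,A)$ as a compact convex set and invokes Krein--Milman, whereas you first give an explicit polyhedral description of $\mUL(y,A)$ via (UL4--5) and then invoke Minkowski--Weyl; the polyhedral scaffolding is sound but not needed, since compactness and convexity already supply the two extreme points, and the paper's verification that $Av=Ax$ and $\|v\|_1=\|x\|_1$ plays exactly the role of your check of the linear constraints $Aw^+=c$, $\langle s,w^+\rangle=L$.
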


\begin{proof}
The set $\mUL(y,A)$ is compact, convex, and  non-empty (from (UL1-2)). In
particular, by the Krein-Milman
Theorem~\cite[Theorem 9.4.6]{TopologicalVectorSpaces},
$\mUL(y,A)$ is the closed convex hull of its extreme points (that is,
points $p \in \mUL(y,A)$ so that if $p' \in \real^N$
is such that $p+p' \in \mUL(y,A)$ and $p-p' \in \mUL(y,A)$ then
$p' = 0$~\cite[Theorem 9.2.2(d)]{TopologicalVectorSpaces}). 
Therefore there must be at least two extreme points of
$\mUL(y,A)$; otherwise, $|\mUL(y,A)|=1$.

To complete the proof, we now show that every extreme point of $\mUL(y,A)$
is in $\Sol^{\mathsf{ms}}(y,A)$. Suppose otherwise, that is, that there is
an extreme point $x \in \mUL(y,A)$ and an $x' \in \mUL(y,A)$ with
$x' \neq x$ and $\supp(x') \subseteq \supp(x)$.
Then (by (UL4-5)), $Ax = Ax'$, $\sgn(x) = \sgn(x')$ and
$\|x\|_1 = \|x'\|_1$.
For $\epsilon > 0$, let $v = (1+\epsilon)x - \epsilon x'$.
If $\epsilon$ is sufficiently small we have that $\sgn(v) = \sgn(x)$
and hence
\begin{align*}
\|v\|_1 = \sum_{i \in \supp(v)} v_i\sgn(v_i) = \sum_{i \in \supp(x)} v_i\sgn(x_i)
&= \sum_{i \in \supp(x)} (1+\epsilon)|x_i| - \epsilon |x'_i|\\
&= (1+\epsilon)\|x\|_1 - \epsilon \|x'\|_1 = \|x\|_1.
\end{align*}
	
Moreover, $Av = Ax$. We thus conclude that $v \in \mUL(y,A)$. But this
contradicts the extremality of $x$ (take
$p'=\epsilon(x-x')\neq 0$; then $x+p'=v\in\mUL(y,A)$ and
$x-p'=(1-\epsilon)x+\epsilon x'\in\mUL(y,A)$ since this set is
convex) and thus all extreme points of $\mUL(y,A)$ are in
$\Sol^{\mathsf{ms}}(y,A)$. We have already shown that there
are multiple extreme points in $\mUL(y,A)$, thus completing
the proof.
\end{proof}

This immediately implies Proposition~\ref{prop:goodSetOneSolution}:
indeed, if $|\mUL(y,A)| \neq 1$ then by
Lemma~\ref{lemma:multipleSolutionsImpliesMS} there exist
$v^1,v^2 \in \mUL(y,A)$ with $\supp(v^1) \neq \supp(v^2)$. By the
definition of $\stabsupp$, we must have $\stabsupp(y,A) = 0$.

\subsection{Proof of Proposition~\ref{prop:sigma1MinimiserCompute}}
To prove part~(1), first note that the result is trivial if
$S = \varnothing$: indeed, in this case $x = 0$ and hence by 
(UL4) the solution is unique. 
We thus consider the case where $S \neq \varnothing$. Assume that
the vector $\tilde{x}$ is such that 
$\tilde{x} \in \mUL(y,A)$. Then $A\tilde{x} = Ax$ (note that this
was stated in Section~\ref{section:BasicFactsAboutUL} as~(UL4)) and so
$\|A^*_{S^{\mathsf{c}}}(A\tilde{x} - y)\|_{\infty} < \lambda/2$. By the KKT conditions 
this implies that $\supp(\tilde{x}) \subseteq S$ (note, in the case
$S^{\mathsf{c}} = \varnothing$ this is trivial). But then 
$A_S\tilde{x}_S = A\tilde{x} = Ax = A_Sx_S$. Since $A_S^*A_S$ is invertible by
assumption, it must have a trivial kernel and hence $x_S = \tilde{x}_S$.
Finally, since both $\supp(x), \supp(\tilde{x}) \subseteq S$ we must
have $x = \tilde{x}$. Thus $|\mUL(y,A)| = 1$. The result about
$\sigma_1,\sigma_2$ and $\sigma_3$ in this circumstance follows
from the fact that in this case, the infima in
Definition~\ref{def:sigma1sigma2sigma3} are taken over a single
vector with finitely many entries.

Part 2 follows immediately from the definition of $\sigma$: note that 
$\|A^*_{S^{\mathsf{c}}}(Ax-y)\|_{\infty} = \lambda/2$ implies that $\sigma_1 = 0$
and non-invertibility of $A_S^*A_S$ implies that $\sigma_2 = 0$.
\eproof

\subsection{Proof of Proposition~\ref{proposition:rhofssigmaub}}

We begin with the following Lemma.

\begin{lemma}\label{lemma:sigma20illcond}
If $\sigma_2(y,A) = 0$ then $(y,A) \in \Sigmul$.
\end{lemma}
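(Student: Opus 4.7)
The plan is to reduce the statement to Proposition~\ref{prop:goodSetOneSolution} by showing that $\sigma_2(y,A)=0$ forces $|\mUL(y,A)|\geq 2$, since the contrapositive of that proposition then gives $(y,A)\in\Sigmul$. The key observation making this reduction work is that the infimum defining $\sigma_2$ is taken over values $\nTwoTwo{(A^*_SA_S)^{-1}}^{-1}$ indexed by supports $S\subseteq\{1,\dots,N\}$ of minimisers, which is a \emph{finite} index set. Hence the infimum is attained: $\sigma_2(y,A)=0$ means there exists $x\in\mUL(y,A)$ with support $S$ such that $\nTwoTwo{(A^*_SA_S)^{-1}}^{-1}=0$. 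The convention in Definition~\ref{def:sigma1sigma2sigma3} sets this value to $\infty$ when $S=\varnothing$, so in fact $S\neq\varnothing$ and $A^*_SA_S$ is genuinely singular.

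Next I would use the singularity to build a second solution. Since $\ker(A^*_SA_S)=\ker(A_S)$, there is some nonzero $v\in\real^{|S|}$ with $A_Sv=0$. Let $\tilde v\in\real^N$ denote the zero-extension of $v$ off $S$, and for $\epsilon>0$ put $\tilde x:=x+\epsilon\tilde v$. Because $x_i\neq 0$ for $i\in S$, choosing $\epsilon$ smaller than $\min_{i\in S,\tilde v_i\neq 0}|x_i|/|\tilde v_i|$ ensures $\sgn(\tilde x_i)=\sgn(x_i)$ for every $i\in S$; for $i\notin S$ both $\tilde x_i$ and $x_i$ vanish, so the multivalued subgradient $\sgn(\tilde x)$ contains the same selection that $\sgn(x)$ does.

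The final step is to verify the KKT condition for $\tilde x$ using (UL5). Since $A\tilde x=Ax+\epsilon A_Sv=Ax$, we have
\[
2A^*(A\tilde x-y)=2A^*(Ax-y)=-\lambda\,\sgn(x),
\]
and the right-hand side is a valid element of $-\lambda\,\sgn(\tilde x)$ by the sign-matching above. Thus $\tilde x\in\mUL(y,A)$; since $\tilde v\neq 0$ and $\epsilon>0$ we have $\tilde x\neq x$, so $|\mUL(y,A)|\geq 2$, and Proposition~\ref{prop:goodSetOneSolution} delivers $(y,A)\in\Sigmul$.

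The only subtlety I expect to have to handle carefully is the bookkeeping for the multivalued sign function at coordinates where $x_i=0$: one must notice that $\sgn(\tilde x)$ and $\sgn(x)$ share a common selection (the one forced by the KKT identity on the complement of $S$), so that the same dual vector certifies optimality for both $x$ and $\tilde x$. Everything else is a direct consequence of the sufficiency of KKT and the finiteness of the support index set.
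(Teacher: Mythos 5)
Your proof is correct and takes essentially the same approach as the paper: extract a kernel vector of $A_S$ from the singularity of $A^*_SA_S$, perturb the unique-on-$S$ minimiser $x$ slightly along its zero-extension, verify the KKT conditions for the perturbed vector directly (using $A\tilde x = Ax$ and sign preservation on $S$), and then invoke the contrapositive of Proposition~\ref{prop:goodSetOneSolution}. The paper's proof additionally observes that at least one of $x\pm\epsilon v$ does not increase the $\ell^1$ norm, but that observation is superfluous once one checks KKT sufficiency as you do, so the two arguments are in substance the same.
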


\begin{proof}
By the definition of $\sigma_2$, if $\sigma_2(y,A) = 0$ then there is a
minimiser $x \in \mUL(y,A)$ and a set $S\neq\varnothing$
such that $\supp(x) = S$ and
$A^*_SA_S$ is non-invertible. In particular, since
$A^*_SA_S \in \real^{|S| \times |S|}$ it must also 
have a non-trivial nullspace. Let $v\in \real^{N}$ be such that
$A^*_S A_S v_S = 0$ (so that 
$\|A_S v_S \|_2^2 = \langle v_S,A_S^*A_S v_S \rangle = 0$)
and $v_{S^{\mathsf{c}}} = 0$.
For $\epsilon > 0$ sufficiently small 
we must have $\sgn(x_S+\epsilon v_S) = \sgn(x_S)$ and so 
$\|x_S\pm \epsilon v_S\|_1 = \langle \sgn(x_S),x_S \pm \epsilon v_S\rangle  = 
\|x\|_1 \pm \epsilon \langle \sgn(x_S),v_S \rangle$. Thus at least one of 
$\|x_S + \epsilon v_S\|_1, \|x_S-\epsilon v_S\|_1$ is bounded above by
$\|x\|_1$. 
Assume that $\|x_S+\epsilon v_S \|_1 \leq \|x\|_1$ (the 
argument for $\|x_S-\epsilon v_S\|_1 \leq \|x\|_1$ is identical). 
In this case, we have $A_S^*A_S (x_S+\epsilon v_S) = A_S^* A_S x_S 
= -\lambda \sgn(x_S)/2 = -\lambda \sgn(x_S+\epsilon v_S)/2$
and $\|A^*_{S^{\mathsf{c}}}A_S (x_S+\epsilon v_S)\|_{\infty} =
\|A^*_{S^{\mathsf{c}}}A_Sx_S\|_{\infty} \leq \lambda/2$. 
Therefore $(x+\epsilon v)$ obeys the KKT conditions and so 
$(x+\epsilon v ) \in \mUL(y,A)$. But then $|\mUL(y,A)| \geq 2$ and so by 
Proposition~\ref{prop:goodSetOneSolution} we have 
$(y,A) \in \Sigmul$.
\end{proof}

We can now prove Proposition~\ref{proposition:rhofssigmaub}.

\noindent{\em Proof of Proposition~\ref{proposition:rhofssigmaub}.}\quad
All parts are trivial if $\stabsupp(y,A)=0$. We then assume that 
$\stabsupp(y,A)>0$. Proposition~\ref{prop:goodSetOneSolution} then
shows that there 
exists a unique point $x$ in $\mUL(y,A)$. Let $S:=\supp(x)$. 
We now prove each of the three parts separately. Note that, because $x$ is the
only point in $\mUL(y,A)$, we don't need to take infima in the definition
of $\sigma_1,\sigma_2$ and $\sigma_3$. Furthermore, if $S = \varnothing$
then parts (2) and (3) are trivial since
$\sigma_2(y,A) = \sigma_3(y,A) = \infty$, so we assume $S\neq\varnothing$
for parts (2) and (3).

\textbf{Proof of part~\ref{lemma:sigma1fscond}:} Note that the condition that
$\sigma_1(y,A) < \lambda/4$ implies that $A$ is not the zero matrix and so
$\|A\|_{\max} \neq 0$. It also implies that $S^{\mathsf{c}} \neq \varnothing$
since otherwise $\sigma_1(y,A) = \lambda/2$.
We will argue by contradiction and assume that the statement does not hold.
Let $t$ be a real number such that 
\begin{equation*}
 \sigma_1(y,A)< t < \min\left\{\frac{\lambda\, \stabsupp(y,A)}{4\nmax{A}},
 \frac{\lambda}{4}\right\}
\end{equation*} 
(note that this interval is non-empty by our assumption). Then  
there is an $i \in \lbrace 1,2,\dotsc,N \rbrace$ such that  
$|[A_{i}^*(Ax-y)]| > \lambda/2 - t$ and $x_i = 0$. Set
$\tilde{A} = A(\Id_N+\delta P_i)$
where $\delta = 2t/(\lambda - 2t) < 4t/\lambda$ and where $P_i$ is 
an $N \times N$ matrix consisting only of $0$s except the entry
on the $i$th row, $i$th column, which is set to $1$. Clearly 
$\|A-\tilde{A}\|_{\max} \leq \delta\|A\|_{\max}\leq
4t\|A\|_{\max}/\lambda<\stabsupp(y,A)$. 
Thus $\supp(\tilde{x}) = \supp(x)$ for all $\tilde{x} \in \mUL(y,\tilde{A})$.

We claim that $x \in \mUL(y,\tilde{A})$. Indeed, since $x_i = 0$
and $\supp(\tilde{x}) = \supp(x)$, we must have $\tilde{x}_i = 0$.
Thus, again as $x_i=0$, 
\begin{equation*}
\|\tilde{A}x-y\|^2_2 + \lambda \|x\|_1 =  \|Ax-y\|^2_2 +\lambda\|x\|_1 
\leq   \|A\tilde{x} - y\|^2_2 + \lambda\|\tilde{x}\|_1 
=  \|\tilde{A}\tilde{x} - y\|^2_2 + \lambda\|\tilde{x}\|_1,
\end{equation*} 
the inequality by the optimality of $x$ and the last equality as
$\tilde{x}_i=0$.
This shows the claim.

By the assumption that $|[A^*(Ax-y)]_i| > \lambda/2 - t$ we have 
\begin{equation*}
|\tilde{A}^*_i (\tilde{A}x-y)| = (1+\delta) |A_i^*(\tilde{A}x - y)| 
= (1+\delta)|A_i^*(Ax-y)| > \!\left(1+\frac{2t}{\lambda - 2t}\right)\!\!
\left(\frac{\lambda}{2} - t\right)\! = \lambda/2
\end{equation*}
but this contradicts the fact that
$\|\tilde{A}^*(\tilde{A}x-y)\|_{\infty} \leq \lambda/2$
by the KKT conditions for the LASSO problem and the fact that
$x \in \mUL(y,\tilde{A})$.

\textbf{Proof of part~\ref{lemma:sigma2fscond}:}
Suppose for the sake of contradiction that for some $\epsilon > 0$ we have 
$\sigma_2(y,A) = t$ but $\stabsupp(y,A) > \sqrt{t+\epsilon}$. Then 
$\|(A^*_SA_S)^{-1}\|_2 =\frac1t > 1/(t + \epsilon)$. This implies that there
is a $v \in \real^{N}$ such that $v_{S^{\mathsf{c}}} = 0$, $\|v\|_2 = 1$ and
$\|A^*_SA_Sv_S\|_2 \leq t+\epsilon$. Therefore, 
$\|A_Sv_S\|_2^2 =\langle A_S^*A_Sv_S,v_S\rangle \leq \|v_S\|_2 \|A^*_SA_Sv_S\|_2$
and we obtain $\|A_Sv_S\|_2 \leq \sqrt{t+\epsilon}$.

Let $B:=A-Avv\transp$. Then 
\[
\nmax{B - A} \leq \nTwoTwo{B-A} = \nTwoTwo{Avv\transp} 
\leq \|Av\|_2\,\|v\transp\|_2 =\|Av\|_2\le \sqrt{t+\epsilon}.
\]
Hence, since $\stabsupp(y,A)> \sqrt{t+\epsilon}$, 
$(y,B) \notin \Sigmul$ and there exists
$\hat{x} \in \mUL(y,B)$ with $\supp(\hat{x}) = S$
(this comes from the definition of $\stabsupp$). But
$$
B_Sv_S= A_Sv_S-Avv_S\transp v_S =A_Sv_S -Av\|v_S\|^2 = A_Sv_S -Av=0
$$
and hence, $B_S^*B_Sv_S = 0$. It follows that $B_S^* B_S$ 
is not invertible. This contradicts Lemma~\ref{lemma:sigma20illcond}.

\textbf{Proof of part~\ref{lemma:sigma3fscond}:}
For shorthand, let $t:=\sigma_3(y,A)$. Then there exists 
an index $i \in \lbrace 1,2,\dotsc,N \rbrace$ such that $|x_i| = t$. 
Let $\tilde{y} = y- x_iAe_i$ where $e_i$ is the $i$th vector in the
standard basis on $\real^{N}$. We claim that
$\tilde{x} = x - x_i e_i$ is such that $\tilde{x} \in \mUL(\tilde{y},A)$. 
Since the KKT conditions are both necessary and sufficient for
unconstrained LASSO, it suffices to show that $\tilde{x}$ obeys the
KKT conditions. 
Let $W$ be the support of $\tilde{x}$. Since $\tilde{x}_W = x_W$,
we have $A\tilde x = A_W\tilde x_W = A_W x_W$ so that
\begin{equation*}
A^*(A\tilde{x} - \tilde{y}) 
= A^*(A_Wx_W - \tilde y) = A^*(A_W x_W + x_i Ae_i - y) = A^*(Ax - y).
\end{equation*}
Therefore $A_W^*(A\tilde{x} - \tilde{y}) = -\lambda \sgn(x)_W/2
= -\lambda\sgn(\tilde{x})_W/2$ (since $x$ obeys the KKT conditions) and on
$W^{\mathsf{c}}$ we have
$\|A_{W^{\mathsf{c}}}^*(A\tilde{x} - \tilde{y})\|_{\infty}
= \|A_{W^{\mathsf{c}}}^*(Ax - y) \|_{\infty} \leq \lambda/2$
(again, since $x$ obeys the KKT conditions). Thus $\tilde{x}$ obeys the
KKT conditions and so $\tilde{x} \in \mUL(\tilde{y},A)$ as claimed.
But then $\supp(\tilde{x}) \neq \supp(x)$ and so
$
\stabsupp(y,A) \leq d_{\max}(\left[(y,A),(\tilde{y},A)\right])
\leq \|x_iAe_i\|_{\infty} \leq t \nmax{A}.
$
\qed

\subsection{Proof of Proposition~\ref{proposition:rhofssigmalb}}

We assume that $\sigma > 0$, otherwise there is nothing to prove. Let 
$x \in \mUL(y,A)$ and $S = \supp(x)$. Additionally, let
$\Delta: = \min\left(\frac{\sigma^2}{q(\alpha,\sigma)}, 
\frac{\sqrt{\sigma}}{6\alpha},\alpha \right)$ and
$\delta := (mN)^{-1/2}\Delta$. 
Finally, let $(\tilde{y},B)$ be such that
$\|\tilde{y} - y\|_{\infty} \leq \delta$ and $\nmax{A-B} \leq \delta$. 

We will use the following results derived from the classic
bounds (see~\cite[\S1.1]{Condition} or~\cite[\S6.2]{Higham96}) between
norms and the definition of~$\Delta$:
\begin{align}
\label{eq:med1}
\nTwoTwo{B-A} &\leq \sqrt{mN} \nmax{B-A} \leq \sqrt{mN}\delta \leq \Delta,\\
\label{eq:med2}	
\nTwoTwo{B} &\leq \nTwoTwo{A} + \Delta \leq 2\alpha,\\
\label{eq:med3}
\|\tilde{y} - y\|_2 &\leq \sqrt{m} \delta \leq \Delta,\\
\label{eq:med4}	
\|\tilde{y}\|_2 &\leq \|y\|_2 + \|\tilde{y}- y\|_2
\leq \alpha + \Delta \leq 2\alpha.
\end{align}
We start with the case where both $S$ and $S^{\mathsf{c}}$ are non empty. 
We initially establish a sequence of basic inequalities. We begin by
observing that
\begin{align*}
B_S^*B_S &= A^*_SA_S + (B_S - A_S)^*A_S + A_S^*(B_S - A_S)
+ (B_S - A_S)^*(B_S -A_S)
\end{align*}
so that $B^*_SB_S = A^*_SA_S(\Id+X)$ where
\[
  X = (A^*_SA_S)^{-1}\left[(B_S - A_S)^*A_S + A_S^*(B_S - A_S)
  + (B_S - A_S)^*(B_S -A_S)\right].
\]
Note that $X$ is well defined since $(A^*_SA_S)^{-1}$ exists by the
assumption that
$\sigma > 0$ and Proposition~\ref{prop:sigma1MinimiserCompute}.
In addition, 
\begin{align}
\nTwoTwo{X} &\leq 
\nTwoTwo{(A^*_SA_S)^{-1}}
\,(\nTwoTwo{(B_S - A_S)^*A_S} + \nTwoTwo{A_S^*(B_S - A_S)}\notag\\
&\,\,+ \nTwoTwo{(B_S - A_S)^*(B_S -A_S)})\notag\\
&\leq \nTwoTwo{(A^*_SA_S)^{-1}}
\left(2\nTwoTwo{B_S - A_S}\,\nTwoTwo{A_S} + \nTwoTwo{B_S -A_S}^2\right)\notag\\
&\le
\frac{2\Delta \nTwoTwo{A} + \Delta^2}{\sigma_2(y,A)}
\leq \frac{3\Delta \alpha}{\sqrt{\sigma}}\label{eq:XBound}
\end{align}
where we used the definition of $\sigma_2$,~\eqref{eq:med1} and  
$\Delta, \nTwoTwo{A} \leq \alpha$. It follows from the hypothesis on
$\Delta$ that $\nTwoTwo{X} < 1/2$.  
Hence, $\Id+X$ is invertible with inverse satisfying
$(\Id+X)^{-1} = \sum_{r=0}^{\infty} (-1)^rX^r$ and consequently,
so is $B_S^*B_S$ and 
we have $(B_S^*B_S)^{-1} = (\Id+X)^{-1}(A^*_SA_S)^{-1}$. Furthermore, 
\begin{align}
\nTwoTwo{(B^*_SB_S)^{-1} \!- (A_S^*A_S)^{-1}}
\!&= \!\nTwoTwo{(\Id+X)^{-1} (A_S^*A_S)^{-1} - (A_S^*A_S)^{-1}} \notag\\
&\leq\! \nTwoTwo{(\Id+X)^{-1} - \Id} \,\nTwoTwo{(A_S^*A_S)^{-1}}\notag\\
& \leq \! \nTwoTwoAl{\sum_{r=1}^{\infty} (-1)^r X^r }\!\!
\nTwoTwo{(A_S^*A_S)^{-1}} \!\leq \!
\frac{\nTwoTwo{X}\nTwoTwo{(A^*_SA_S)^{-1}}}{1-\nTwoTwo{X}} \notag \\
&<\! \frac{2\nTwoTwo{X}}{\sigma_2(y,A)}
\leq \frac{6\Delta\alpha}{\sigma},\label{eq:med5}
\end{align}
where the last inequality following from~\eqref{eq:XBound}.  In particular,
\begin{align}\label{eq:med6}
& \big\|(B_S^*B_S)^{-1}B^*_S\tilde{y} - (A_S^*A_S)^{-1}A^*_Sy\big\|_2\notag\\ 
 \leq\;& \big\|\big[(B_S^*B_S)^{-1}B^*_S - (A_S^*A_S)^{-1}B_S^*\big]
 \tilde{y}\big\|_2 + \big\|\big[(A_S^*A_S)^{-1}B^*_S - (A_S^*A_S)^{-1}A^*_S\big]
 \tilde{y}\big\|_2 \\
+&\big\|(A_S^*A_S)^{-1}A_S^*(y - \tilde{y})\big\|_2 \notag\\
 \leq\;& \nTwoTwo{(B^*_SB_S)^{-1} - (A_S^*A_S)^{-1}} \nTwoTwo{B^*_S}
 \|\tilde{y}\|_2 \\
\;+&\nTwoTwo{(A_S^*A_S)^{-1}} \big(\nTwoTwo{B^*_S - A^*_S} \|\tilde{y}\|_2 
+ \nTwoTwo{A^*_S} \|y- \tilde y\|_2\big)\notag\\
\leq\;& \frac{6\Delta\alpha(2\alpha)(2\alpha)}{\sigma}+ 
\frac{ \Delta(2\alpha)+\Delta \alpha}{\sigma_2(y,A)}
\qquad\mbox{by~\eqref{eq:med1},~\eqref{eq:med5},~\eqref{eq:med3},
	and ~\eqref{eq:med4}}\notag\\
=\;& \frac{24\Delta\alpha^3}{\sigma}+\frac{3\Delta \alpha}{\sqrt{\sigma}}
=: \Upsilon.
\end{align}	

Since $\sigma>0$ and $x$ obeys the KKT conditions for unconstrained LASSO
with input $(y,A)$, we see that
$x_S = (A_S^* A_S)^{-1}(A_S^* y  - \lambda\sgn(x)_S/2 )$ and $x_{S^{\mathsf{c}}} =0$.
Hence, if we let $\tilde{x}\in\real^N$ be given by 
$\tilde{x}_S := (B_S^*B_S)^{-1}(B_S^*\tilde{y} - \lambda\,\sgn(x)_S/2)$ and 
$\tilde{x}_{S^{\mathsf{c}}} := 0$ then 
\begin{align}\label{eq:med7}
2\|\tilde{x} - x\|_2 
&\leq \|(B^*_S B_S)^{-1} (2B_S^*\tilde{y} - \lambda\,\sgn(x)_S) - (A^*_S A_S)^{-1} 
 (2A_S^*y - \lambda\,\sgn(x)_S)\|_2\notag\\
&\leq  2\left\|(B^*_S B_S)^{-1} B_S^*\tilde{y}
  - (A^*_S A_S)^{-1}A_S^*y\right\|_2  \\
&+\lambda\left\|\left[(B^*_S B_S)^{-1} - (A^*_S A_S)^{-1} \right]
 \sgn(x)_S\right\|_2\notag\\
&\!\!\underset{\eqref{eq:med6}}{\leq} 2\Upsilon
 + \lambda\nTwoTwo{(B^*_S B_S)^{-1} - (A^*_S A_S)^{-1}}\|\sgn(x)_S\|_2 
\underset{\eqref{eq:med5}}{\leq} 2\Upsilon
+ \frac{6\Delta \alpha\lambda \sqrt{|S|}}{\sigma}\notag\\
& \leq 6\Delta\alpha \left( \frac{8\alpha^2}{\sigma}
+\frac{1 + \lambda \sqrt{|S|}}{\sqrt{\sigma}}\right)
\end{align}
Also, since $x \in \mUL(y,A)$ we have (by (UL1))
$\|Ax-y\|^2_2,\lambda\|x\|_1 \leq \|y\|^2_2$. Thus 
\begin{align}\label{eq:med8}
\|B^*_{S^{\mathsf{c}}}&(B\tilde{x} -\tilde{y})
 - A^*_{S^{\mathsf{c}}}(Ax - y)\|_2 \notag\\
\leq& \|B^*_{S^{\mathsf{c}}}(B\tilde{x} - \tilde{y})
 - B^*_{S^{\mathsf{c}}}(Bx - \tilde{y})\|_2 
 + \|B^*_{S^{\mathsf{c}}}(Bx - \tilde{y})
 - B^*_{S^{\mathsf{c}}}(Ax - \tilde{y})\|_2\notag\\
&+\|B^*_{S^{\mathsf{c}}}(Ax - \tilde{y})- B^*_{S^{\mathsf{c}}}(Ax - y)\|_2\ 
 + \|B^*_{S^{\mathsf{c}}}(Ax - y)- A^*_{S^{\mathsf{c}}}(Ax - y)\|_2\notag\\
\leq& \nTwoTwo{B}\left(\nTwoTwo{B} \|\tilde{x} - x\|_2 + \nTwoTwo{B-A}\|x\|_2
 + \|y-\tilde{y}\|_2\right) + \nTwoTwo{B-A} \|Ax-y\|_2\notag\\
\leq& 2\alpha\left[ 2\alpha\|(\tilde{x}-x)\|_2
 + \frac{\nTwoTwo{B-A} \|y\|^2_2}{\lambda} + \|y-\tilde{y}\|_2\right] 
 + \Delta \|y\|_2 \notag\\
\leq& 2\alpha\left[ 6\Delta\alpha^2 \left( \frac{8\alpha^2}{\sigma}+\frac{1 
 + \lambda \sqrt{|S|}}{\sqrt{\sigma}}\right) + \frac{\Delta\alpha^2}{\lambda}
 + \Delta\right] + \Delta \alpha \notag\\
=\quad&   12\Delta\alpha^3 \left( \frac{8\alpha^2}{\sigma}
 +\frac{1 + \lambda \sqrt{|S|}}{\sqrt{\sigma}}\right) 
 + \frac{2\Delta\alpha^3}{\lambda} + 3\Delta\alpha\notag\\
\leq & \Delta \left( \frac{96\alpha^5}{\sigma}+\frac{12\alpha^3 
 + 12\alpha^3\lambda \sqrt{|S|}}{\sqrt{\sigma}}
 + \frac{2\alpha^3}{\lambda} + 3\alpha\right)
\leq \Delta q(\alpha,\sigma)/\sigma.
\end{align}

We can now conclude the proof. Using~\eqref{eq:med7} and the
definition of $\Delta$ we obtain that 
\begin{eqnarray*}
\|\tilde{x} - x\|_{\infty}
&\leq& \|\tilde{x} - x\|_2 \;\leq\; 3\Delta\alpha
\left(\frac{8\alpha^2}{\sigma}+\frac{1+\lambda\sqrt{|S|}}{\sqrt{\sigma}}\right)
\;\leq\; 3\frac{\sigma^2}{q(\alpha,\sigma)}\alpha
\left(\frac{8\alpha^2}{\sigma}
+\frac{1+\lambda \sqrt{|S|}}{\sqrt{\sigma}}\right)\\
&=& \sigma\left(\frac{24\alpha^3+3\alpha\sqrt{\sigma}(1+\lambda\sqrt{|S|})}
{96\alpha^5+12\alpha^3(1+\lambda\sqrt{N})\sqrt{\sigma}+
\left(\frac{2\alpha^3}{\lambda}+3\alpha\right)}\right)\:<\; \sigma
\end{eqnarray*} 
and therefore, if we set $S^+ = \lbrace i \, \vert \, x_i > 0\rbrace$,
then for each $i \in S^+$ we have that 
$
\tilde{x}_i \geq x_i - |\tilde{x}_i - x_i| > \sigma_3(y,A) - \sigma \geq 0.
$ 
Similarly, if we set $S^- = \lbrace i \, \vert \, x_i < 0\rbrace$ then
for each $i\in S^-$ we have 
$\tilde{x}_i  < 0$. It follows that each entry of $\tilde{x}_S$ is non-zero
and that 
$\sgn(\tilde{x})_S = \sgn(x)_S$ and so, 
\begin{equation}\label{eq:med9}
2B_S^*(B\tilde{x} - \tilde{y}) = \lambda \,\sgn(x)_S
= \lambda\, \sgn(\tilde{x})_S.
\end{equation}
Using~\eqref{eq:med8} and the definition of $\Delta$ we show as above that 
$\|B^*_{S^{\mathsf{c}}}(B\tilde{x} - \tilde{y}) - A^*_{S^{\mathsf{c}}}(Ax - y)\|_2
< \sigma$
and thus 
\begin{align}\label{eq:med10}
\|B^*_{S^{\mathsf{c}}}(B\tilde{x} - \tilde{y})\|_{\infty}
&\leq \|B^*_{S^{\mathsf{c}}}(B\tilde{x} - \tilde{y}) - 
A^*_{S^{\mathsf{c}}}(Ax - y)\|_{\infty} + \|A^*_{S^{\mathsf{c}}}(Ax-y)\|_{\infty} \notag\\
&< \sigma +\lambda/2 - \sigma_1(y,A) \leq \lambda/2.
\end{align}	

Inequalities~\eqref{eq:med9} and~\eqref{eq:med10}, together with the
fact that each entry of $\tilde{x}_S$ is non-zero show that
$\tilde{x}$ satisfies the unconstrained LASSO KKT conditions
for $(\tilde{y},B)$ and that the hypothesis of
Proposition~\ref{prop:sigma1MinimiserCompute}
part~\eqref{prop:sigma1MinimiserSgNZ}
holds. We therefore have $\lbrace \tilde{x} \rbrace
= \mUL(\tilde{y},B)$. Since each entry of $\tilde{x}_S$ is non-zero
and $\tilde{x}_{S^{\mathsf{c}}}=0$, we have $\supp(\tilde{x}) = \supp(x)$ and
therefore all vectors in $\mUL(\tilde{y},B)$ have the same support as
$x$. 

We now argue that the same result holds for $S = \varnothing$ or
$S^{\mathsf{c}} = \varnothing$. In the former case, we set
$\tilde x =0$. Then the bound in~\eqref{eq:med7} holds trivially
and~\eqref{eq:med8} and ~\eqref{eq:med10} follow as before. Thus
$\tilde x$ satisfies the unconstrained LASSO KKT conditions for
$(\tilde{y},B)$ and we are done. In the later case where
$S^c= \varnothing$, the only difference is that there is no need to
compute~\eqref{eq:med8}: it suffices to use~\eqref{eq:med7} to
conclude~\eqref{eq:med9}, which shows that $\tilde x$ satisfies the
unconstrained LASSO KKT conditions.

We have thus shown in each case for $S$ that all vectors in
$\mUL(\tilde{y},B)$ have the same support as $x$, for every pair
$(\tilde{y},B)$ with $\|\tilde{y} - y\|_{\infty} \leq \delta$ and
$\nmax{A-B} \leq \delta$. We deduce that $\stabsupp(y,A) \geq \delta$
which completes the proof.

\subsection{A convex quadratic routine for unconstrained LASSO}
\label{section:subroutines}

In addition to the subroutines $\orFindMatrix^U$ and
$\orFindVector^b$, which are assumed within our computational model we
will use the following subroutine:
\begin{quote}
\begin{description}
\item[$\orULasso(y,A,\lambda)$:] given $y\in{\mathbb Q}^m$,
$A\in{\mathbb Q}^{m\times N}$, and $\lambda\in{\mathbb Q}$, it 
returns a vector $x \in \mUL(y,A)$ in time $O(N^3 \log_2(L))$ where
$L$ is the total number of bits of $A$, $y$ and $\lambda$.
\end{description}
\end{quote}
The existence of $\orULasso$ follows from the fact that unconstrained
LASSO can be written as a convex quadratic program. To see this, if we write
$x = x^+ - x^-$ and 
$\tilde{x} = \begin{pmatrix}x^+ & x^-\end{pmatrix}\transp$ with 
$x^+ \geq 0$ and $x^- \geq 0$ (where these inequalities are taken
entrywise) we have
\begin{align*}
\|Ax-y\|^2_2 + \lambda \|x\|_1
&= \langle Ax,Ax \rangle - 2\langle A^*y,x \rangle 
 + \lambda \sum_{i=1}^{N} (x^+_i + x^-_i) + \|y\|^2_2\\
&= \tilde{x}^* M\tilde{x} + (\lambda\mathbf{1}_{2N}-2By)^* \tilde{x} + \|y\|^2_2 
\end{align*}
where $\mathbf{1}_N$ is a vector of length $N$ with each entry equal to $1$, 
$M = \begin{pmatrix} A^*A &-A^*A\\ -A^*A & A^*A \end{pmatrix}$ and 
$B = \begin{pmatrix} A^* \\ - A^* \end{pmatrix}$. Note also that $M$
is positive semi-definite. Since $\|y\|^2_2$ is constant, we conclude
that the solutions to the quadratic program in standard form 
\[
 \argmin_{\tilde{x} \in \real^{N}} \{\tilde{x}^* M\tilde{x} 
 + (\lambda\mathbf{1}_{2N}-2By)^* \tilde{x} \, \vert \, -\Id_{2N}\tilde{x}
 \leq 0\rbrace
\] 
can be converted to solutions of the unconstrained LASSO problem with
inputs $A$ and $y$. Thus using the algorithm proposed in
e.g.~\cite{MonteiroON3L}, we obtain an algorithm that works in 
$\Oh(N^3L) = \Oh(N^{3}p + N^{3}\log_2(m+N)) = \Oh(N^{3}(p+\log_2(N)))$
arithmetic operations, where $L$ is the total number of bits required
to store $A$, $y$ and $\lambda$ and $p$ is the maximal number of bits
required to store any entry of $A$, $y$ and $\lambda$. 

\subsection{A subroutine for testing upper bounds for $\sigma$}
\label{section:UpperBoundSubroutine}
Our aim in this section is to produce an algorithm that can tell us,
for a given value of $C$, whether or not $\sigma \leq C^2$. 
More precisely, we aim to produce the following subroutine to add to the
one discussed above,
\begin{quote}
\begin{description}
\item[$\orSigma(y,A,x,\lambda,C)$:] with input $y \in \mathbb{Q}^m$, 
$A\in \mathbb{Q}^{m \times N}$, $x\in \mathbb{Q}^N$, and $\lambda,C\in\mathbb{Q}$, 
returns $\mathsf{true}$ if $\sigma(y,A) \leq C^2$ and $\mathsf{false}$
otherwise. The precondition is that $x \in \mUL(y,A)$ and $\lambda$ is
the unconstrained LASSO parameter. The cost of running this procedure 
is $\Oh(N^3)$.
\end{description}
\end{quote}

Executing $\orSigma(y,A,x,\lambda,C)$ requires deciding if
$\sigma_1(y,A) \leq C^2$, $\sigma_2(y,A) \leq C$ and
$\sigma_3(y,A) \leq C^2$. Let
$S:=\supp(x)$.  Note that computing $\sigma_1$ can be done by
computing $A^*_{S^{\mathsf{c}}}(Ax - y)$ (at a cost of $\Oh(mN)$
operations), finding the maximum absolute value across all rows
(taking $\Oh(|S^{\mathsf{c}}|) = \Oh(N)$ operations) and then subtracting
$\lambda/2$. Hence $\sigma_1$ can be
computed in $\Oh(mN)$ operations. We compute $\sigma_3$ via a simple maximum
argument requiring $\Oh(N)$ operations. The hardest of the three to
compute is $\sigma_2$; we will approximate the smallest singular value
of $A^*_S A_S$ by testing the positive definiteness of $A^*_S A_S -
t \Id$ for various values of $t$. Note that if $A^*_S A_S - t\Id$ is
not positive definite then $\nTwoTwo{(A_S^*A_S)^{-1}}^{-1} \leq
t$. Conversely, if $A^*_SA_S - t\Id$ is postiive definite then
$\nTwoTwo{(A^*_SA_S)^{-1}}^{-1} > t$. Indeed, the following chain of
equivalences hold since $A^*_SA_S$ is symmetric: $A^*_SA_S-
t\Id$ is positive definite $\iff$ each eigenvalue of $A^*_SA_S$ is
larger than $t$ $\iff$ each eigenvalue of $(A^*_SA_S)^{-1}$ is smaller
than $t^{-1}$ $\iff \nTwoTwo{(A^*_SA_S)^{-1}} < t^{-1}$.

\begin{remark}
It is well known that there is an algorithm $\orPosDef$, that can check
if a symmetric $r \times r$ matrix is positive definite or not.
Such an algorithm runs with $\mathcal{O}(r^3)$ operations. 
\end{remark}

We can now precisely state our algorithm for computing $\sigma$
(where we take by convention $\|A^*_{\varnothing}(Ax-y)\|_{\infty} = 0$
and $A^*_{\varnothing}A_{\varnothing}-C\Id$ as positive definite for any $C>0$).

\SetKw{ands}{and}
\begin{algorithm2e}
\SetAlgoLined
\TitleOfAlgo{$\orSigma(y,A,x,\lambda,C)$}
\KwData{$A\in \real^{m \times N}$, $y \in \real^{m}$, 
	$x \in \mUL(y,A)\subseteq\real^{N}$, $C > 0$, and the 
	LASSO parameter $\lambda$.}
\KwResult{$\mathsf{true}$ if $\sigma \leq C^2$, otherwise, $\mathsf{false}$}
$S:=\supp(x)$\;
\eIf{$\|A^*_{S^{\mathsf{c}}}(Ax-y)\|_{\infty} <\lambda/2$ \ands $A^*_S A_S$
{\rm is invertible}}
{$\sigma_1:= \lambda/2-\max_{i \in S^{\mathsf{c}}}(|A^*_{i} (Ax - y)|) $\;
$\sigma_3:= \|x_{S^{\mathsf{c}}}\|_{\infty}$\;
$X := A^*_SA_S - C\Id$\;
\eIf{$\orPosDef(X)$ \ands $\sigma_1 \leq C^2$ \ands $\sigma_3 \leq C^2$}
{
  \Return $\mathsf{true}$;
}
{
  \Return $\mathsf{false}$;
}
}
{\Return $\mathsf{true}$}
\end{algorithm2e}

\begin{proposition}\label{prop:Sigma}
\sloppy Algorithm $\orSigma$ is correct (the returned value is
$\mathsf{true}$ iff $\sigma(y,A) \leq C^2$).
Its running time is bounded by $\Oh(N^3)$.
\end{proposition}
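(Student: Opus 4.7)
The plan is to verify correctness by a case split on the outer conditional of $\orSigma$ and then account for the cost of each basic step.

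If the outer test fails, i.e., if $\|A^*_{S^{\mathsf{c}}}(Ax-y)\|_\infty = \lambda/2$ or $A^*_S A_S$ is singular, then Proposition~\ref{prop:sigma1MinimiserCompute}(2) immediately yields $\sigma(y,A) = 0 \leq C^2$, so returning $\mathsf{true}$ is correct. In the other branch, Proposition~\ref{prop:sigma1MinimiserCompute}(1) tells me that $\mUL(y,A) = \{x\}$ and gives closed forms $\sigma_1(y,A) = \lambda/2 - \|A^*_{S^{\mathsf{c}}}(Ax-y)\|_\infty$, $\sigma_2(y,A) = \nTwoTwo{(A^*_S A_S)^{-1}}^{-1}$, and $\sigma_3(y,A) = \min_{i\in S}|x_i|$. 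The values $\sigma_1$ and $\sigma_3$ can be read off directly via a maximum and a minimum of scalars, respectively. For $\sigma_2$, direct computation of the smallest eigenvalue of $A^*_S A_S$ would be awkward, so I would instead invoke the equivalence recorded in the paragraph preceding the algorithm: $A^*_S A_S - C\Id$ is positive definite if and only if every eigenvalue of $A^*_S A_S$ exceeds $C$, equivalently $\sigma_2(y,A) > C$. Since $\sigma(y,A) = \min(\sigma_1,\sigma_2^2,\sigma_3)$, the condition $\sigma(y,A) \leq C^2$ is equivalent to the disjunction $\sigma_1 \leq C^2$ or $\sigma_2 \leq C$ or $\sigma_3 \leq C^2$, and each of these three tests is available from the pieces just described; the boolean combination in the pseudocode is exactly what is needed to return $\mathsf{true}$ iff this disjunction holds.

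For the complexity, forming $Ax-y$ and then $A^*(Ax-y)$ costs $O(mN)$ arithmetic operations, after which extracting the $S^{\mathsf{c}}$-coordinates and computing $\sigma_1$ and $\sigma_3$ adds only $O(mN+N)$ more. The expensive step is the positive-definiteness test: assembling $A^*_S A_S$ takes $O(m|S|^2)$ and the call $\orPosDef(A^*_S A_S - C\Id)$ runs in $O(|S|^3)$ by the standard routine referenced in the text. Testing invertibility of $A^*_S A_S$ fits within the same budget (e.g.\ by Gaussian elimination). Since $|S| \leq N$, all of these costs are absorbed into the stated $O(N^3)$ bound.

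The main obstacle is nothing deeper than correctly translating between the arithmetic characterisation of $\sigma \leq C^2$ as a disjunction over three sub-conditions and the boolean combination implemented in the pseudocode; the substantive mathematical content has already been supplied by Proposition~\ref{prop:sigma1MinimiserCompute} and by the positive-definiteness characterisation of the sign of $\sigma_2-C$. What remains is therefore essentially bookkeeping.
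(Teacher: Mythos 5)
Your proposal follows the same structure as the paper's proof: case--split on the outer conditional, invoke Proposition~\ref{prop:sigma1MinimiserCompute}(2) in the $\mathsf{false}$ branch and Proposition~\ref{prop:sigma1MinimiserCompute}(1) in the $\mathsf{true}$ branch to collapse the infima, then account for costs line by line. Your extra step --- spelling out that $\sigma(y,A)\le C^2$ is, in the unique-minimiser branch, equivalent to the disjunction $(\sigma_1\le C^2)\vee(\sigma_2\le C)\vee(\sigma_3\le C^2)$ --- is exactly the logical content the paper's proof leaves implicit, and it is correct.

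However, the sentence \emph{``the boolean combination in the pseudocode is exactly what is needed''} is asserted rather than checked, and it does not in fact hold for the pseudocode as printed. The inner test returns $\mathsf{true}$ precisely when $\orPosDef(X)\wedge(\sigma_1\le C^2)\wedge(\sigma_3\le C^2)$. This is a conjunction, not a disjunction, and the $\orPosDef$ guard is on the wrong side: $\orPosDef(X)=\mathsf{true}$ means $\sigma_2(y,A)>C$, not $\sigma_2(y,A)\le C$. A correct implementation of your (correct) criterion would be to return $\mathsf{true}$ iff $\bigl(\neg\,\orPosDef(X)\bigr)\vee(\sigma_1\le C^2)\vee(\sigma_3\le C^2)$. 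Likewise the assignment $\sigma_3:=\|x_{S^{\mathsf{c}}}\|_\infty$ is vacuous (it is always $0$ since $S=\supp(x)$) and should read $\sigma_3:=\min_{i\in S}|x_i|$, consistent with Proposition~\ref{prop:sigma1MinimiserCompute}(1), which you do use. These are evidently transcription errors in the paper --- the surrounding analysis and the way $\orSigma$ is invoked inside $\orFSUL$ only make sense with the corrected logic --- but a complete proof of the correctness claim must verify, not assume, that the pseudocode implements the disjunction you derived. Your cost accounting is fine and matches the paper's.
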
 

\begin{proof}
We begin proving correctness. Because $x\in\mUL(y,A)$, 
by Proposition~\ref{prop:sigma1MinimiserCompute}(1), 
if $A^*_{S^{\mathsf{c}}}(Ax-y) < \lambda/2$ and $A^*_S A_S$ is invertible, then
$x$ is the only solution of $(y,A)$. In this case the infima in the
definition of $\sigma_1,\sigma_2$ and $\sigma_3$ reduces to the
corresponding values at $S$ and the correctness of the estimates for the
$\sigma$s has already been argued above. If, instead,
either $A^*_{S^{\mathsf{c}}}(Ax-y) \ge \lambda/2$ or $A^*_S A_S$ is invertible,
then, by Proposition~\ref{prop:sigma1MinimiserCompute}(2), 
$\sigma = 0$ and, clearly, $\sigma\le C^2$. 

We now show the complexity bound. 
As mentioned earlier, computing $\sigma_1$ takes $\Oh(mN)$ operations 
and computing $\sigma_3$ takes $\Oh(N)$ operations.  
Computing $X$ can be done using $\Oh(|S|^2N) = \Oh(N^3)$ operations.
Also, as discussed earlier, the computation of $\orPosDef$
takes $|S|^3/3 = \Oh(N^3)$ operations. 
Thus the total complexity of the algorithm is
$\Oh(N^3 + mN + N + N^3) = \Oh(N^3)$.
\end{proof}

\subsection{Proof of Theorem~\ref{thm:fsulcomp}, parts~(1) \& (2)}
\label{section:ProofOfMainTheorem}
Our argument for both parts (1) and (2) will involve increasing the
precision of the approximations $(y,A)$ of the true
input, which we denote throughout the proof by $(b,U)$, until 
$\sigma(y,A)$ is sufficiently large. This strategy will give us both the
solution to unconstrained LASSO feature selection (part 1) and an upper
bound for the condition number (part 2). This is done in the algorithm
$\orFSUL$.

\begin{algorithm2e}
\SetAlgoLined
\TitleOfAlgo{$\orFSUL$}
\vspace{-0.15cm}
\KwData{$\lambda>0$ as well as oracles $\orFindMatrix^U$ and 
	$\orFindVector^b$ that approximate $(b,U)$ to 
	arbitrary precision}
\KwResult{a support set $S\subseteq\{1,\ldots,N\}$ and $\eta$ such that
$\Cond(b,U) \leq \eta$}
$\delta := 1, \quad n:=0,\quad \delta^{1/4}:=1$\;
\Repeat{{\bf not} $\orSigma(y,A,x,\lambda,C)$ {\bf and}
$G^2 (mN)^{-1}  \geq 4\delta^2$}{
	$n:=n+1$\;
	$\delta:=\delta/16$\;
	$A:= \orFindMatrix^U(4n)$\;
	$y:= \orFindVector^b(4n)$\;
	$x:= \orULasso(y,A,\lambda)$\;
	$\delta^{1/4}:=\delta^{1/4}/2$\;
	$G:=\nTrMax{y,A}$\;
	$H:=\nTrOneStar{y,A}$\;
	$C:=6 \cdot \delta^{1/4}N(\lambda + \lambda^{-1})H^2$\;
}
\Return $S := \supp(x)$ and $\eta = \delta^{-1}$.
\end{algorithm2e}

There are two things that need to be shown: firstly, that the
algorithm is correct in the sense that if the {\bf repeat} loop
terminates, the output $S$ is correct and $\eta$ is a bound for
$\condfsul(b,U)$. Secondly, that the runtime is bounded as stated,
which will also give us a guarantee that the {\bf repeat} loop does in
fact terminate when $\condfsul(b,U)<\infty$.  The correctness proof
will require the use of Proposition~\ref{proposition:rhofssigmalb} and
the runtime bound the use of
Proposition~\ref{proposition:rhofssigmaub}. We split the proof of the
runtime into two further sections: firstly, we evaluate the cost of
each iteration of the loop, and secondly we show a bound on the number
of iterations that are executed.

\subsubsection{Proof of correctness:}
\sloppy Assume that the repeat loop terminates. We use the notation
$S,n,\delta,A,y,x,G,H$
and $C$ to denote the values of these variables set by
the algorithm when the {\bf repeat} loop has terminated.  
We claim that $\stabsupp(y,A) \geq 2\delta$. Assume for now 
that this is the case.
Then by the properties~\eqref{eq:OracleGuarantee} of
$\orFindMatrix^U$ and $\orFindVector^b$, at the $n$th
iteration of the repeat loop we must have
$d_{\infty}(b,y),d_{\max}(A,U) \leq 2^{-4n}= 16^{-n} = \delta$.

Thus when the {\bf repeat} loop terminates, 
$\stabsupp(y,A) \geq 2\delta > d_{\max}\left[(y,A),(b,U)\right]$ and so
if $w \in \mUL(b,U)$ and $v \in \mUL(y,A)$ then $\supp(w) = \supp(v)$.
But since $x \in \mUL(y,A)$ by the correctness of $\orULasso$, we must
have $\supp(w) = \supp(x) = S$. Furthermore, since
$\stabsupp(b,U) \geq \stabsupp(y,A) - d_{\max}\left[(y,A),(b,U)\right]
\geq 2\delta - \delta = \delta$, we must have
$\condfsul(b,U) \leq \delta^{-1} = \eta$.

Therefore we have shown that if the algorithm terminates 
then it terminates with the correct result, provided that we can show that 
$\stabsupp(y,A) \geq 2\delta$. To that end, we will make use of the
following lemma.

\begin{lemma}\label{lemma:CorrectnessOfAlgorithm}
Let $H\geq \nu\ge 1$, $\delta\le 1$, and $\lambda>0$. Let 
$C = 6 \delta^{1/4}N(\lambda + \lambda^{-1})H^2$. For all $\xi \geq C^2$
we have
\begin{equation*}
 f(\xi):= \xi^2 -2\delta \sqrt{mN}q(\nu,\xi)\geq 0 \text{\quad and\quad }
 g(\xi):= \frac{\sqrt{\xi}(mN)^{-1/2}}{6\nu} - 2\delta \geq 0
\end{equation*} 
where, we recall, $q(\nu,\xi)$ is defined in~\eqref{eq:q}.
\end{lemma}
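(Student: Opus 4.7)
The lemma is a routine but bookkeeping-intensive polynomial inequality check. The plan is to treat $f(\xi)\ge 0$ and $g(\xi)\ge 0$ separately, and in both cases to reduce the claim to a single substitution at the lower endpoint $\xi = C^2$, exploiting the fact that $f$ and $g$ are increasing on $[C^2,\infty)$ (for $g$ this is obvious; for $f$ one only needs $\xi$ large enough compared to $\nu$ and $\lambda$ that the derivative is nonnegative, which will be automatic under the standing hypotheses).

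The easy half is $g$. It suffices to show $\sqrt{\xi}\ge 12\delta\nu\sqrt{mN}$. Since $\xi\ge C^2$ gives $\sqrt{\xi}\ge C = 6\delta^{1/4}N(\lambda+\lambda^{-1})H^2$, I would use the three elementary bounds $\delta^{1/4}\ge\delta$ (because $\delta\le 1$), $\lambda+\lambda^{-1}\ge 2$, and $H^2\ge\nu$ (because $H\ge\nu\ge 1$), then absorb the remaining factor of $N/\sqrt{mN}$ into the slack coming from $6\cdot 2\cdot\nu\ge 12\nu$. This is purely algebraic.

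For $f(\xi)\ge 0$, the plan is to split $q(\nu,\xi)$ into its three summands and prove the cleaner-looking triangle-style inequality
\begin{equation*}
\xi^2 \;\ge\; 2\delta\sqrt{mN}\cdot 96\nu^5 \;+\; 2\delta\sqrt{mN}\cdot 12\nu^3(1+\lambda\sqrt{N})\sqrt{\xi} \;+\; 2\delta\sqrt{mN}\cdot\xi\!\left(\tfrac{2\nu^3}{\lambda}+3\nu\right)
\end{equation*}
by showing each of the three terms on the right is at most $\xi^2/3$. The three resulting inequalities involve $\xi^2$ versus a constant, versus a $\sqrt{\xi}$-term, and versus a linear-in-$\xi$ term, respectively; each reduces, after cancellation and using $\delta^{1/4}\ge\delta$, to a polynomial inequality in $N,H,\nu,\lambda,\delta$. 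The powers in $C = 6\delta^{1/4}N(\lambda+\lambda^{-1})H^2$ are chosen precisely so that all three bounds hold: the $H^2$ factor handles the $\nu^5$ and $\nu^3$ terms (since $H\ge\nu$), the $(\lambda+\lambda^{-1})$ factor absorbs both the $1/\lambda$ in the third summand and the $\lambda\sqrt{N}$ in the second, the $N$ factor absorbs $\sqrt{mN}$ (with room to spare via $N\ge 1$), and the $\delta^{1/4}$ in $C$ is calibrated so that $C^4\cdot\delta^{-1}$ still dominates the constant summand $\nu^5\sqrt{mN}$.

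The main obstacle is purely one of bookkeeping: tracking the exponents of $\delta$, $\nu$, $H$, $N$ and $(\lambda+\lambda^{-1})$ across the three separate inequalities, and keeping the numerical constants (the $6$, $12$, $96$, and the factor of $3$ from the splitting) balanced. There is no conceptual difficulty; once the three bounds are written out at $\xi=C^2$, each is a direct consequence of $H\ge\nu\ge 1$, $\delta\le 1$, and $\lambda+\lambda^{-1}\ge 2$. I would present the computation as three short display lines, one per summand, to keep the verification transparent.
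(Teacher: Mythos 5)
Your proof is correct and takes a genuinely different route for the $f$ part. The paper first proves $f$ is increasing on $[C^2,\infty)$ (by computing $f'$, observing $f'$ is itself increasing, and verifying $f'(C^2)\ge 0$ with three auxiliary inequalities), and then separately checks $f(C^2)\ge 0$ via a fairly involved cancellation. You instead split $q(\nu,\xi)$ into its three summands and show each of $2\delta\sqrt{mN}\cdot 96\nu^5$, $2\delta\sqrt{mN}\cdot 12\nu^3(1+\lambda\sqrt{N})\sqrt{\xi}$, and $2\delta\sqrt{mN}\cdot\xi(2\nu^3/\lambda + 3\nu)$ is at most $\xi^2/3$. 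This is cleaner: after dividing each of these three inequalities through by the appropriate power of $\xi$, each becomes \emph{separately} monotone in $\xi$, so checking them at $\xi=C^2$ suffices for all $\xi\ge C^2$ and no derivative computation is needed. (This also makes your opening remark about the monotonicity of $f$ being "automatic" superfluous — the three-way split renders it unnecessary; I would drop that remark so the logic is not misread as depending on it.) The three endpoint checks go through under $H\ge\nu\ge 1$, $\delta\le 1$, $\lambda+\lambda^{-1}\ge 2$, and $m\le N$ (the latter being implicit in the paper as well, used to absorb $\sqrt{mN}\le N$). The $g$ part of your argument coincides with the paper's. The one thing worth making explicit in the write-up is the bound $1+\lambda\sqrt{N}\le 2(\lambda+\lambda^{-1})\sqrt{N}$ in the second summand and the bound $2\nu^3/\lambda+3\nu\le \tfrac{7}{2}\nu^3(\lambda+\lambda^{-1})$ (or similar) in the third, since those are the only two places where the algebra is not completely mechanical.
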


\sloppy Assuming for now that Lemma~\ref{lemma:CorrectnessOfAlgorithm} holds,
we will complete the proof that $\stabsupp(y,A) \geq 2\delta$. Indeed, when
the algorithm terminates $\sigma(y,A)> C^2$ because
$\orSigma(y,A,x,\lambda,C)$ does not hold. Now we apply 
Lemma~\ref{lemma:CorrectnessOfAlgorithm} with
$H=\nTrOneStar{y,A}$, $\alpha = \nTrTwo{y,A}$
and $\sigma=\sigma(y,A)$.
Using the inequality for $f$ in this lemma with $\xi=\sigma$
and $\nu=\alpha$ we obtain
$\sigma^2 \geq 2\delta\sqrt{mN}q(\alpha,\sigma)$ and thus
$(mN)^{-1/2}\sigma^2/q(\alpha,\sigma)\geq 2\delta$. Next, using the inequality 
for $g$, we obtain
$(mN)^{-1/2} \sqrt{\sigma}/(6\alpha)\geq 2\delta$. Finally, when the
{\bf repeat} loop terminates we must have
$G^2 (mN)^{-1}\geq 4\delta^2$ and in particular, since $\alpha \geq G$,
we get $(mN)^{-1/2} \alpha\geq2\delta$. 
Combining these inequalities yields
\begin{equation}\label{eq:CorrectnessFinalBound}
\stabsupp(y,A) \geq (mN)^{-1/2}\min(\sigma^2/q(\alpha,\sigma),
\sqrt{\sigma}/(6\alpha),\alpha)\geq2 \delta
\end{equation}
where the first inequality follows from
Proposition~\ref{proposition:rhofssigmalb}.

All that remains is to prove Lemma~\ref{lemma:CorrectnessOfAlgorithm}.
This is done as follows.

\begin{proof}[Proof of Lemma~\ref{lemma:CorrectnessOfAlgorithm}]
We make frequent use of the inequality $H \geq \nu$. 
We start by proving that $f(\xi)$ is increasing on $[C^2,\infty)$. We have
$f'(\xi) = 2\xi - 2\delta \sqrt{mN}
[6\nu^3(1+\lambda \sqrt{N})/\sqrt{\xi} + 2\nu^3/\lambda + 3]$.
The form of $f'(\xi)$ makes it clear to see that for positive $\xi$,
$f'(\xi)$ is increasing and so to show that $f(\xi)$ is increasing on
$[C^2,\infty)$ it suffices to show that $f'(C^2)\geq 0$. 
Because $\delta \leq 1$, we have $\delta^{2/4} \geq \delta^{3/4}$.
Furthermore, $H \geq \nu \geq 1$ and $N \geq 1$. Thus
\begin{equation} \label{eq:CorrectnessSecondDerBound}
\delta^{2/4} N^2(\lambda^2 + 2 + \lambda^{-2})H^4 \geq 2\delta^{2/4} N^2H
\geq \delta^{2/4} \left(1+\frac{\lambda\sqrt{N}}
{\lambda + \lambda^{-1}}\right)\nu
\geq \frac{\nu\delta^{3/4}(1+\lambda\sqrt{N})}{\lambda + \lambda^{-1}}
\end{equation}

Furthermore, $(\lambda^2 + 2 + \lambda^{-2})\lambda > 2\lambda
+ \lambda^{-1} > \lambda + \lambda^{-1} \geq 2$ by the AM-GM inequality
and the assumption that $\lambda > 0$. Thus
\begin{equation}\label{eq:CorrectnessThirdDerBound}
  \delta^{2/4}N^2(\lambda^2 + 2 + \lambda^{-2})H^4  \geq
  2\delta^{2/4}N^2\nu^3\lambda^{-1} \geq 2\nu^3\delta N \lambda^{-1}.
\end{equation}
Our final simple inequality is the following: using
$\lambda^2 + \lambda^{-2} \geq 2$
and $H,N \geq 1\geq \delta$, we get
\begin{equation}\label{eq:CorrectnessForthDerBound}
	\delta^{2/4}N^2(\lambda^2 + 2 + \lambda^{-2})H^4 \geq
	4\delta^{2/4}N^2\nu  \geq 3\delta N\nu.
\end{equation} 
Using the definitions of $f$, $C$ and the bounds $H \geq \nu$, $H,N\ge 1$,
and $m \leq N$, we get
\begin{align*}
f'(C^2) &= 2C^2 - \delta \sqrt{mN}
\left[\frac{6\nu^3(1+\lambda \sqrt{N})}{C} + \frac{2\nu^3}{\lambda}
+ 3\nu\right]\\
&\geq (2\cdot 6^2) \delta^{2/4}N^2(\lambda^2 + 2 +  \lambda^{-2})H^4 - 2\delta N
 \left[\frac{6\nu^3(1+\lambda \sqrt{N})}
 {6 \delta^{1/4}N(\lambda + \lambda^{-1})H^2}
 + \frac{2\nu^3}{\lambda} + 3\nu\right]\\
&\geq 6\delta^{2/4}N^2(\lambda^2 + 2 +  \lambda^{-2})H^4 -
 2\left[\frac{\nu\delta^{3/4}(1+\lambda \sqrt{N})}{(\lambda + \lambda^{-1})}
 + \frac{2\nu^3\delta N}{\lambda} + 3\nu\delta N\right] \geq 0,
\end{align*}
where the subtraction in  the last line decomposes into three subtractions
each of them being non-negative by
(\ref{eq:CorrectnessSecondDerBound}--\ref{eq:CorrectnessForthDerBound}).
We conclude that $f$ is increasing on $[C^2,\infty)$. Thus to show that
$f$ is positive it is sufficient to show that $f(C^2) > 0$. Indeed,
\begin{align*}
\frac{f(C^2)}{(NH^2)^4\delta} &= (6(\lambda + \lambda^{-1}))^4
 -  \frac{2\sqrt{mN}}{(NH^2)^4}\biggl[96\nu^5 + 12\nu^3
(1+\lambda \sqrt{N})(6\delta^{1/4}N(\lambda + \lambda^{-1})H^2) \\
& \quad \quad + 6^2 \delta^{2/4} N^2(\lambda + \lambda^{-1})^2 H^4
\left(\frac{2\nu^3}{\lambda} + 3\nu\right) \biggr]\\
&\geq  6^4(\lambda + \lambda^{-1})^4-2 \!\left(\!96 + 12 (1+\lambda)
(6(\lambda + \lambda^{-1}))
+6^2 (\lambda + \lambda^{-1})^2 \left(\frac{2}{\lambda}+3\right)\!\right)\\
&=  [6^4 (\lambda + \lambda^{-1})^4 -
2(180\lambda^2 + 144\lambda  + 384\lambda^0 + 216\lambda ^{-1}
+ 108\lambda^{-2} + 72\lambda^{-3})]\\
&\geq 6^4 [\lambda^4 +4\lambda^{2} + 6 + 4\lambda ^{-2}
+ \lambda^{-4} - (\lambda^2 +\lambda +1 +\lambda^{-1}
+\lambda^{-2} + \lambda^{-3})]\\
&\geq 6^4[\lambda^4 + 4\lambda^2 + 6 + 4 \lambda^{-2} + \lambda^{-4} -
(2\lambda^2 +3 + 3\lambda^{-2} + \lambda^{-4})] \geq 0.
\end{align*}
where in the last line we have made use of the facts that
$\lambda < \lambda^2 +1$, $\lambda^{-1} <\lambda^{-2} + 1$ and
$\lambda^{-3} < \lambda^{-4} +\lambda^{-2}$ all of which following from
$\lambda>0$.
Since $f$ is increasing on $[C^2,\infty)$ and
$f(C^2) \geq0$, we get that $f(\xi) \geq 0$ for $\xi \geq C^2$, completing
the proof for $f$.

We finish the argument by showing that $g(\xi)> 0$. This is
somewhat simpler: since $\xi \ge C^2$ by assumption, we have
$\xi\ge 36\delta^{2/4} N^2 (\lambda + \lambda^{-1})^2 H^4 \geq 36 \cdot 4
\delta^2 (mN)\nu^2$ since $\delta \leq 1$, $H \geq \nu$,
$N\geq m$ and $\lambda + \lambda^{-1} \geq 2$. Taking square roots yields
$\sqrt{\xi} \geq 12 \delta\nu (mN)^{1/2}$, or, equivalently,
$\sqrt{\xi} (mN)^{-1/2}/(6\nu) \geq 2\delta$.
\end{proof}

\subsubsection{A bound on the number of iterations \& precision that the repeat loop requires}
\newcommand{\rt}{\mathop{\stabsupp(b,U)}} 
\newcommand{\rs}{\triple{b,U}_{\max}}
\newcommand{\at}{\triple{y,A}_{\max}}
\newcommand{\as}{\triple{y,A}_{1}}
\newcommand{\bt}{\mathop{\stabsupp(y,A)}}

The number of iterations \& the maximum precision of the {\bf repeat} loop is bounded in the
following lemma.

\begin{lemma}\label{lemma:boundOnIterations}
There exists a universal constant $D$ independent of all parameters
such that for any $n \in \mathbb{N}$ with
\begin{equation}\label{eq:nBound}
	n \geq D \log_2\left(\max\{\lambda
        + \lambda^{-1},N,\nTrMax{b,U},\condfsul(b,U)\}\right)
\end{equation}
and any $(y,A)$ and $(b,U)$ such that
$d_{\max}[(y,A),(b,U)]\leq 2^{-n} =: \delta$, we have  
$\triple{y,A}_{\max} (mN)^{-1/2} \geq \delta$ and $\sigma(y,A) \geq C^2$ 
where $C = 6 \delta^{1/4}N(\lambda + \lambda^{-1})\nTrOneStar{y,A}^2$.

As a consequence, the number of iterations of the {\bf repeat} loop
in $\orFSUL$ (and thus, the maximum digits of precision used by $\orFSUL$) on input $(b,U)$ is bounded by 
\[
\Oh(\left\lceil \log_2\left(\max\{\lambda + \lambda^{-1},N,\nTrOneStar{b,U},
\condfsul(b,U)\}\right)\right\rceil).
\]	
\end{lemma}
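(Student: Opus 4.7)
The plan is to establish the two pointwise inequalities in the first half of the lemma for $n$ satisfying the stated bound, and then read off the consequence on iteration count and precision. Indeed, the $k$th pass of the \textbf{repeat} loop in $\orFSUL$ uses $\delta = 16^{-k} = 2^{-4k}$, so once $4k$ exceeds the logarithmic threshold the two termination conditions both hold, forcing the loop to halt; the iteration count and the maximum precision $4k$ are therefore both $\mathcal{O}(\log_2(\max\{\lambda+\lambda^{-1},N,\nTrOneStar{b,U},\condfsul(b,U)\}))$. The first pointwise inequality, $\triple{y,A}_{\max}(mN)^{-1/2}\ge\delta$, is nearly free: $\triple{y,A}_{\max}\ge 1$ by definition and $m\le N$, so it holds whenever $n\ge\log_2 N$.

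The substance lies in the second inequality $\sigma(y,A)\ge C^2$ with $C = 6\delta^{1/4}N(\lambda+\lambda^{-1})\nTrOneStar{y,A}^2$. I proceed in four steps. Step one is a Lipschitz-type bound on the stability support: straight from the definition of $\stabsupp$ and the triangle inequality for $d_{\max}$, one has $\stabsupp(y,A)\ge\stabsupp(b,U)-\delta$, hence $\stabsupp(y,A)\ge(2\condfsul(b,U))^{-1}$ once $\delta\le(2\condfsul(b,U))^{-1}$, i.e.\ $n\gtrsim\log_2\condfsul(b,U)$. Step two is to apply Proposition~\ref{proposition:rhofssigmaub} contrapositively to get $\sigma_2(y,A)\ge\stabsupp(y,A)^2$, $\sigma_3(y,A)\ge\stabsupp(y,A)/\nmax{A}$, and $\sigma_1(y,A)\ge\min\{\lambda/4,\lambda\stabsupp(y,A)/(4\nmax{A})\}$; combining with $\sigma=\min(\sigma_1,\sigma_2^2,\sigma_3)$ and noting that for small $\stabsupp$ the fourth-power term dominates, I obtain $\sigma(y,A)\gtrsim\condfsul(b,U)^{-4}$ up to polynomial factors in $\lambda$ and $\nmax{A}$. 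Step three is to control those auxiliary factors: $\nmax{A}\le\nTrMax{b,U}+\delta$ and $\nTrOneStar{y,A}\le\nTrOneStar{b,U}+mN\delta\le 2\nTrOneStar{b,U}$, both valid once $n$ is logarithmic in the data. Step four is the comparison: $C^2$ is proportional to $\delta^{1/2}$ times polynomial factors in $N$, $\lambda+\lambda^{-1}$, and $\nTrOneStar{b,U}$, whereas the lower bound on $\sigma(y,A)$ is independent of $\delta$. The requirement therefore collapses to a single inequality of the form $\delta^{1/2}\lesssim\condfsul(b,U)^{-4}/\mathrm{poly}(N,\lambda+\lambda^{-1},\nTrOneStar{b,U})$, which holds for $n\ge D\log_2(\max\{\lambda+\lambda^{-1},N,\nTrOneStar{b,U},\condfsul(b,U)\})$ with $D$ a universal constant.

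The main obstacle is bookkeeping rather than insight: after inverting $\delta^{1/2}$ against $\stabsupp^4$, the effective exponent of $\condfsul(b,U)$ in the required $n$ is $8$ rather than $4$, and the constant $D$ must simultaneously absorb multiplicative factors in $\nTrOneStar{b,U}$, $\lambda+\lambda^{-1}$, and $N$. I also need to treat separately the edge cases (for instance $\sigma_1(y,A)\ge\lambda/4$, or $\stabsupp(b,U)\ge 1$) in which some term of the minimum dominates trivially and the inequality is only easier to verify.
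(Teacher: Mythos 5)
Your proposal is correct and takes essentially the same route as the paper: both rely on the Lipschitz estimate $\stabsupp(y,A)\geq\stabsupp(b,U)-\delta$ to pin down $\condfsul(y,A)\lesssim\condfsul(b,U)$, then invert Proposition~\ref{proposition:rhofssigmaub} to bound each $\sigma_i(y,A)$ below in terms of $\stabsupp(y,A)$, control $\nmax{A}$ and $\nTrOneStar{y,A}$ by the corresponding quantities for $(b,U)$, and finally compare against $C^2\sim\delta^{1/2}\cdot\mathrm{poly}(N,\lambda+\lambda^{-1},\nTrOneStar{y,A})$. The only cosmetic difference is that you merge the three $\sigma_i$ bounds into a single power bound $\sigma\gtrsim\condfsul^{-4}/\mathrm{poly}$ and then confront it with $\delta^{1/2}$, whereas the paper keeps the three cases separate, works additively in $\log_2$, and introduces a placeholder $X$ to absorb constants --- both treatments lead to the same choice of universal $D$. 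You correctly flag the $\sigma_1\geq\lambda/4$ case split, which the paper also handles explicitly.
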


\begin{proof}
As we don't need to give a precise value for $D$ in the bound on the
number of iterations we will give a shorter proof without
making this value explicit using big $\Oh$ and big $\Omega$
notation~\cite{Knuth76}.  To show that $\sigma(y,A) \geq C^2$
we need to show that $\sigma_1(y,A) \geq C^2$,
$\sigma_2(y,A) \geq C$ and $\sigma_3(y,A) \geq C^2$. We start
by noting that using well-known norm
inequalities~\cite[\S6.2]{Higham96} and the fact that $m \leq
N$ we get $\nTrOneStar{y,A} \leq N^2\nTrMax{y,A}$. Moreover,
$\nTrMax{y,A} \leq \nTrMax{b,U} + \delta$ and since
$\delta \leq 1 \leq \nTrMax{y,A}$ we obtain $\nTrMax{y,A}\leq
2\nTrMax{b,U}$. Hence
\begin{equation}\label{eq:boundnorms}
\nTrOneStar{y,A} \leq 2N^2\nTrMax{b,U}.
\end{equation}

Since $\lambda + \lambda^{-1} \geq 2$, the bound~\eqref{eq:nBound}
implies that $n \geq D\max\{2,\log_2\condfsul(b,U)\}\geq
D\max\{1,\log_2\condfsul(b,U)\}$. Hence, since 
$\bt \geq \rt - \delta$, we obtain
\begin{equation}\label{eq:btVSrt}
  \bt \geq  \rt - \delta \geq\rt -2^{-D\max\{1,\log_2\condfsul(b,U)\}}.
\end{equation}
We now claim that for $D> 2$ we have
\begin{equation}\label{eq:twoconditions}
  \condfsul(y,A) \le 2\condfsul(b,U).
\end{equation}
Indeed, if $\condfsul(b,U)\ge 2$ we see (recall, $\condfsul(b,U)=\rt^{-1}$)
that 
\[
  \rt-2^{-D\max\{1,\log_2\condfsul(b,U)\}} \ge \rt - {\rt}^D \geq \rt /2
\]
whereas if $\condfsul(b,U)\le 2$ we have (since in this case
$2^{-D} \leq 1/4 \leq \rt/2$)
\[
  \rt -2^{-D\max\{1,\log_2\condfsul(b,U)\}} = \rt - 2^{-D} \geq \rt /2.
\]
In both cases, $\rt-2^{-D\max\{1,\log_2\condfsul(b,U)\}}\geq \rt /2$. 
This, together with~\eqref{eq:btVSrt}, proves the
claim~\eqref{eq:twoconditions}.
The bound~\eqref{eq:nBound}, together with~\eqref{eq:boundnorms}
and~\eqref{eq:twoconditions}, 
thus implies that
\begin{equation}\label{eq:nDX}
	n \geq DX/4
\end{equation}
with
\[
  X := \log_2\left(\max\{\lambda + \lambda^{-1},N,
  \nTrOneStar{y,A},\condfsul(y,A)\}\right).
\]
Because $\log _2\max\{\alpha,\beta\}=\Omega(\log_2\alpha+\log_2\beta)$
we have 
\begin{equation}\label{eq:X2}
  X=\Omega(\log_2((\lambda + \lambda^{-1})N\nTrOneStar{y,A}\condfsul(y,A))
\end{equation}
and
\begin{align}\label{eq:X1}
  X=\Omega(\log_2((\lambda + \lambda^{-1})N\nTrOneStar{y,A}) 
  &=\Omega(\log_2((\lambda + \lambda^{-1})N\nTrOneStar{y,A}^2))\notag \\
  &= \Omega\left(\log_2\frac{C}{\delta^{1/4}}\right).
\end{align}
By Proposition~\ref{proposition:rhofssigmaub}(1), for
$\sigma_1(y,A)  < \lambda/4$, we have 
$\log_2(\sigma_1(y,A))\ge \log_2(\bt)
- \log_2\left(\frac{4\nmax{A}}{\lambda}\right)$. Hence, 
using that $\nTrOneStar{y,A}\ge \|A\|_{\max}$ and 
that $\condfsul(y,A)=\bt^{-1}$,
\begin{eqnarray*}
 \log_2(\sigma_1(y,A)) + n &\underset{\eqref{eq:nDX}}{\geq}& 
 -\log_2\condfsul(y,A) - \log_2\left(\frac{4\nmax{A}}{\lambda}\right)
 + DX/4\\
 &\underset{\eqref{eq:X2}}{\geq}& \Oh(X)-DX/4 \ge KX 
 \underset{\eqref{eq:X1}}{\geq} 2\log_2(C) + n
\end{eqnarray*}
where the constant $K$ is chosen so that $KX \geq 2 \log_2(C) + 2n$
(in particular, this implies the last inequality) and then the
penultimate inequality is ensured by choosing $D$ sufficiently
large. If instead $\sigma_1(y,A) \geq \lambda/4$, we use that
$X\ge |\log_2 \lambda|$ to deduce that
\begin{equation*}
  \log_2(\sigma_1(y,A)) + n \geq \log_2(\lambda/4) + DX/4 \ge KX
  \geq 2\log_2(C) + n
\end{equation*}
the last part of the reasoning being identical to the above.
In both cases, we get $\sigma_1(y,A)\ge C^2$. 

We next use Proposition~\ref{proposition:rhofssigmaub}(2--3) to get,
reasoning as above, 
\begin{align*}
  \log_2(\sigma_2(y,A)) + n &\geq -2\log_2\condfsul(y,A) + \frac{DX}{4}
  \geq \frac{KX}{2}\ge \log_2(C/\delta^{\frac{1}{4}}) = \log_2(C) + n\\
  \log_2(\sigma_3(y,A)) + n &\geq -\log_2\condfsul(y,A)
  - \log_2(\nmax{A}) + \frac{DX}{4}\ge KX \geq 2\log_2(C) + n
\end{align*}
which shows $\sigma_2(y,A)\ge C$ and $\sigma_3(y,A)\ge C^2$, as we wanted
to prove. From the definition of $\sigma$ we
conclude that $\sigma(y,A) \geq C^2$. 

All that remains is to show that
$\nTrMax{y,A} (mN)^{-1/2} \geq 2\delta$. But for any $D\geq 2$, it follows
from~\eqref{eq:nBound} that
\[
  n-1 \geq n/2\geq \log_2 N \geq \log_2((mN)^{1/2})
  \ge \log_2((mN)^{1/2}) -\log_2\nTrMax{y,A} 
\]
the third inequality as $m\le N$ and the last as $\nTrMax{y,A}\ge 1$. 
This completes the proof by noting that $2^{1-n} = 2\delta.$
\end{proof}

\subsubsection{A bound on the number of arithmetic operations performed
at the $n$th iteration of the repeat loop}

We analyse each section of the {\bf repeat} loop line by line, calculating
the number of operations performed in big $\Oh$ notation. The first two
lines, $n:=n+1$ and $\delta := \delta/16$, can be done using two
arithmetic operations.

Next we analyse the calls to oracles and subroutines. Firstly, 
$\orFindMatrix^U(n)$ and $\orFindVector^b(n)$ are executed with cost
$\Oh(mN(\log_2(\nmax{U}+1)+n))$ and
$\Oh(m(\log_2(\|b\|_{\infty}+1)+n))$, respectively. Both quantities are
bounded by $\Oh(N^2(\log_2\nTrMax{b,U}+n))$.
The call to $\orULasso(y,A,\lambda)$ takes $\Oh(N^{3}(p+\log_2(N))$ 
operations where $p$ is the largest number of bits for the entries of 
$y,A$ and $\lambda$. Note that
$p\leq \max\{\log_2(\nmax{A}+1)+n,
\log_2(\|y\|_{\infty}+1)+ n,p(\lambda)\} \leq
\max\{\log_2(3\nTrMax{b,U})+n,p(\lambda)\}$
and thus the runtime of 
$\orULasso(y,A,\lambda)$ is
$\Oh[N^{3}(\max\{\log_2(\nTrMax{b,U})+n,p(\lambda)\}+ \log_2(N))]$.
Calculating $\delta^{1/4}$ can be done in one arithmetic operation.
We can calculate $H$ and $G$ in $\Oh(mN) = \Oh(N^2)$ operations. 
To calculate $C$ requires five multiplications and one addition of already
calculated quantities and thus takes $\Oh(1)$ time. Finally, the call to 
$\orSigma$ takes $\Oh(N^3)$ operations (Proposition~\ref{prop:Sigma}).

Thus the total runtime of the $n$th iteration of the loop is 
\begin{align*}
 &\Oh(N^2(\log_2\nTrMax{b,U} +n)\\
 &+ \Oh[N^{3}(\max\{\log_2(\nTrMax{b,U})+n+1,p(\lambda)\}
 + \log_2(N))] + \Oh(N^3)
\end{align*}
which is equal to 
\[ 
 \Oh[N^{3}(\max\{\log_2(\nTrMax{b,U})+n,p(\lambda)\} + \log_2(N))].
\]

\subsubsection{The overall runtime of the algorithm}

By Lemma~\ref{lemma:boundOnIterations}, 
the number of iterations of $\orFSUL$ is bounded above by some $r$ with 
\[
 r=\Oh(\left\lceil \log_2\left(\max\{\lambda + \lambda^{-1},N,
 \nTrMax{b,U},\condfsul(b,U)\}\right)\right\rceil).
\] 
Thus, the total runtime is bounded above by 
\begin{align*}
 &\sum_{n=1}^{r}
 \Oh[N^{3}(\max\{\log_2(\nTrMax{b,U})+n+1,p(\lambda)\} + \log_2(N))]\\
 &=\Oh\left\{N^{3}\left[r\log_2(\nTrMax{b,U})+r^2+rp(\lambda)
 + r\log_2(N)\right]\right\}.
\end{align*}
Clearly, $\log_2(N)=\Oh(r)$ and $r\log_2(\nTrMax{b,U})=\Oh(r^2)$.
Hence the total runtime is bounded above by $\Oh(N^{3}r^2 + N^3rp(\lambda))$
i.e.
\newcommand{\finalComplexityBound}{
\Oh\bigl\{N^{3}&\left[\log_2\left(N^2(\lambda + \lambda^{-1})^2 \nTrMax{b,U}^2
 \condfsul(b,U)\right)\right]^2\\
 +& N^3\log_2\left(N^2(\lambda + \lambda^{-1})^2 \nTrMax{b,U}^2
 \condfsul(b,U)\right)p(\lambda)\bigr\}
}
\begin{align*}
 \finalComplexityBound
\end{align*}

Coupled with the earlier argument to show correctness for the algorithm,
we have completed the proof of Theorem~\ref{thm:fsulcomp}~parts~(1) and~(2).
\eproof

\subsection{Proof of Theorem~\ref{thm:fsulcomp}, part~(3)}

We start by proving the following: 
\begin{lemma}\label{lemma:PeturbToGetEitherMin}
Let $(y,U) \in \Omega$ and $x \in \Sol^{\mathsf{ms}}(y,U)$. Then if $E$ is the diagonal matrix with entries
$(\indic_{\{1 \notin \supp(x)\}},
\indic_{\{2 \notin \supp(x)\}} ,
\dotsc,\indic_{\{N \notin \supp(x)\} })$ where $\indic_{\{ i \notin \supp(x) \} }$ is $1$ if $i \notin \supp(x)$ and $0$ otherwise and if $\delta\in(0,1)$ we have
then $\mUL(y,U(\Id-\delta E^1))=\{x\}$.
\end{lemma}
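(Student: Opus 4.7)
The plan is to use the KKT characterisation (UL5) together with the minimal-support property of $x$. Write $S := \supp(x)$ and $U' := U(\Id-\delta E)$. The key observation is that $U'_j = U_j$ for $j\in S$ and $U'_j = (1-\delta)U_j$ for $j\in S^{\mathsf c}$, so since $x$ vanishes on $S^{\mathsf c}$ we have $U'x = Ux$, which in particular means $U'x - y = Ux - y$.

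First I would show that $x\in\mUL(y,U')$ by verifying the KKT conditions from (UL5). For $j\in S$, $(U'_j)^*(U'x-y) = U_j^*(Ux-y) = -\tfrac{\lambda}{2}\sgn(x_j)$ from the KKT conditions for the original problem $(y,U)$, and $\sgn(x_j)\in\{-1,1\}$ since $x_j\neq 0$. For $j\in S^{\mathsf c}$, $(U'_j)^*(U'x-y) = (1-\delta)U_j^*(Ux-y)$, and the original KKT bound $|U_j^*(Ux-y)|\leq \lambda/2$ gives $|(U'_j)^*(U'x-y)| \leq (1-\delta)\lambda/2 < \lambda/2$, which is consistent with any $\sgn(x_j)\in[-1,1]$ (here $\sgn(0)$ is interpreted as the $\ell^1$ subdifferential). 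Hence $x$ satisfies the KKT conditions for $(y,U')$, and by (UL5) $x\in \mUL(y,U')$.

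Next I would establish that any $\tilde x\in\mUL(y,U')$ has $\supp(\tilde x) = S$. By (UL5) the sign vector is constant on $\mUL(y,U')$, so from the computation above, for $j\in S$ we have $\sgn(\tilde x_j) = \sgn(x_j)\in\{\pm 1\}$, forcing $\tilde x_j\neq 0$ and hence $S\subseteq\supp(\tilde x)$. For $j\in S^{\mathsf c}$, the same sign consistency requires $\sgn(\tilde x_j) = -2(1-\delta)\lambda^{-1}U_j^*(Ux-y)\in(-1,1)$ strictly (by the strict inequality $1-\delta<1$), which is incompatible with $\tilde x_j\neq 0$ because then $\sgn(\tilde x_j)\in\{\pm 1\}$. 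Therefore $\tilde x_j = 0$, i.e.\ $\supp(\tilde x)\subseteq S$, so $\supp(\tilde x) = S$.

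Finally I would invoke the minimal-support hypothesis. From (UL4) applied to $(y,U')$, $U'\tilde x = U'x$, and since both $x$ and $\tilde x$ are supported in $S$ this reads $U_S\tilde x_S = U_S x_S$. I claim $\tilde x\in \mUL(y,U)$: on $S$, $U_S^*(U\tilde x - y) = U_S^*(Ux-y) = -\tfrac{\lambda}{2}\sgn(x)_S = -\tfrac{\lambda}{2}\sgn(\tilde x)_S$, and on $S^{\mathsf c}$, $\|U_{S^{\mathsf c}}^*(U\tilde x-y)\|_\infty = \|U_{S^{\mathsf c}}^*(Ux-y)\|_\infty \leq \lambda/2$, so $\tilde x$ obeys the KKT conditions for $(y,U)$ and lies in $\mUL(y,U)$. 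Since $\supp(\tilde x)\subseteq \supp(x)$ and $x\in\Sol^{\mathsf{ms}}(y,U)$, the definition~\eqref{eq:def-min-supp} forces $\tilde x = x$. The main conceptual step is the second one: the strict shrinkage factor $(1-\delta)$ on the off-support columns kills any ambiguity in the subdifferential of $\|\cdot\|_1$ away from $S$, which is exactly what turns an $\mathsf{ms}$-solution into a unique solution after the perturbation.
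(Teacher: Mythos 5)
Your proof is correct, but it follows a genuinely different route from the paper's. The paper argues directly at the level of the objective function: starting from any $v$ with $\|(U-\delta UE)v-y\|_2^2+\lambda\|v\|_1 \leq \|(U-\delta UE)x-y\|_2^2+\lambda\|x\|_1$, it builds the shrunk vector $\hat v_i = v_i$ on $S$ and $\hat v_i=(1-\delta)v_i$ off $S$, notes that $(U-\delta UE)v = U\hat v$ while $\|\hat v\|_1 < \|v\|_1$ strictly if $\supp(v)\not\subseteq S$, and derives a contradiction with $x\in\mUL(y,U)$. That argument is elementary and never touches the optimality conditions. You instead verify KKT for $(y,U')$ directly, use (UL4)/(UL5) to show any minimiser $\tilde x$ of the perturbed problem has $\supp(\tilde x)\subseteq S$ via the strict shrinkage of the off-support inner products below $\lambda/2$, and then transfer $\tilde x$ back to a minimiser of $(y,U)$ where the minimal-support property finishes the job. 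Both work; the paper's is shorter and avoids KKT, yours makes the role of the strict factor $(1-\delta)$ on the subdifferential more transparent.

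One small but genuine error worth flagging, though it is harmless to your conclusion: you claim that for $j\in S$, ``$\sgn(\tilde x_j)=\sgn(x_j)\in\{\pm1\}$ forces $\tilde x_j\neq 0$.'' That inference is not valid. When $\tilde x_j=0$ the subdifferential element at coordinate $j$ ranges over all of $[-1,1]$, which includes $\pm1$, so a required subgradient value of $\pm1$ is perfectly compatible with $\tilde x_j=0$. Fortunately you never use $S\subseteq\supp(\tilde x)$; the final step relies only on $\supp(\tilde x)\subseteq S$ together with $\tilde x\in\mUL(y,U)$ and the definition of $\Sol^{\mathsf{ms}}$, so the argument closes cleanly once you delete that one sentence. (When writing the KKT for $\tilde x$ on $S$, you also need to observe that for indices $j\in S$ with $\tilde x_j=0$ you are free to choose the subgradient to be $\sgn(x_j)$, which you implicitly do but should state.)
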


\begin{proof}[Proof of Lemma~\ref{lemma:PeturbToGetEitherMin}]
This proof is similar to~\cite[Lemma 17.3]{opt_big}. We show that $x$ is the unique vector in $\mUL(y,U-\delta U E)$.

Suppose that $v$ is such that $\|(U-\delta U E)v - y\|^2_2
+ \lambda \|v\|_1 \leq \|(U-\delta U E) x - y\|_2^2 + \lambda \|x\|_1$.
We claim that $\supp(v) \subseteq \supp(x)$. Otherwise, if we
let $\hat v$ be defined by $\hat v_i = v_i$ whenever $i \in \supp(x)$
and $\hat v_i = (1-\delta) v_i$ whenever $i \notin \supp(x)$,
we obtain $(U -UE^1)v = U\hat v$ and $\|\hat v\|_1 < \|v\|_1$
(the strict inequality follows from $\supp(v) \not\subseteq \supp(x)$)
and hence (setting $f(w):= \|Uw-y\|^2_2 + \lambda \|w\|_1$)
\[
f(\hat v) < \|(U - \delta UE)v\|^2_2 + \lambda \|v\|_1
\leq \|(U - \delta U E) x- y\|_2^2 + \lambda \|x\|_1 = f(x)
\]
contradicting $x \in \mUL(y,U)$. Hence $\supp(v) \subseteq \supp(x^1)$.
But then $Uv = (U - \delta UE)v$ and so 
\[
f(v) = \|(U - \delta UE)v\|^2_2 + \lambda \|v\|_1
\leq \|(U - \delta UE) x - y\|_2^2 + \lambda \|x^1\|_1 = f(x^1)
\]
so $v \in \mUL(y,U)$ with $\supp(v) = \supp(x)$. It follows from
$x \in \Sol^{\mathsf{ms}}(y,U)$ that $v = x$ and so $x$ is the
unique vector in $\mUL (y,U-UE^1)$.
\end{proof}

\begin{proof}[Proof of Theorem~\ref{thm:fsulcomp}, part~(3)]
We will consider two cases: firstly, the case where there exists
$(b,U) \in \Omega$ with $|\Xi(b,U)| > 1$ and secondly the case where
all $(b,U) \in \Omega$ have $|\Xi(b,U)| = 1$.

\textbf{Case 1: There exists $(b,U) \in \Omega$ with $|\Xi(b,U)| > 1$.}

By Lemma~\ref{lemma:multipleSolutionsImpliesMS}, there exists
$x^1,x^2 \in \mULMS(b,U)$ with $\supp(x^1) \neq \supp(x^2)$. We now
use Lemma~\ref{lemma:PeturbToGetEitherMin} to see that for each
$n \in \mathbb{N}$ and for $i=1,2$, there exists
$b^{i,n} \in \real^m,U^{i,n} \in \real^{m \times N}$ so that
$\Xi(b^{i,n},U^{i,n}) = \supp(x^i)$ and $\|b^{i,n} -
b\|_{\infty} \leq 2^{-n}$, $\nmax{U^{i,n} - U} \leq 4^{-n}$. Since
$\Omega$ is open, we can pass to a subsequence and assume that
$(b^{i,n},U^{i,n}) \in \Omega$ for all $n \in \mathbb{N}$ and
$i \in \{1,2\}$. The conclusion now follows by an application of
Proposition~\ref{prop:DrivingNegativeProposition}, where we choose
$\iota^1_n = (b^{1,n},U^{1,n})$, $\iota^2_n = (b^{2,n},U^{2,n})$ and
$\iota^0 = (b,U)$.

\textbf{Case 2: All $(b,U) \in \Omega$ have $|\Xi(b,U)| = 1$.}

Let $\Xi(y,A) = s^0$. By assumption, $\condfsul(y,A) = \infty$. Thus
there exists a sequence
$(y^n,A^n) \in \real^{m} \times \real^{m \times N}$ with $\|y^n -
y\|_{\infty} \leq 4^{-n}$, $\nmax{A^n - A} \leq 4^{-n}$ and
$\Xi(y^n,A^n) = s^{1,n}$ with $s^{1,n} \neq s^0$. Since $\Omega$ is
open we can assume by potentially passing to a subsequence that
$(y^n,A^n) \in \Omega$. Furthermore, since the number of possible
supports for vectors in $\real^N$ is finite, $\{s^{1,n} \, \vert \,
n \in \mathbb{N}\}$ is finite and we can again pass to a subsequence
to assume that $s^{1,n} = s^1$. Once again, the conclusion now follows
by an application of
Proposition~\ref{prop:DrivingNegativeProposition}, where we choose
$\iota^1_n = (y^{1,n},A^{1,n})$ and $\iota^0 = \iota^2_n = (y,A)$.
\end{proof}

\section{Tools from the SCI hierarchy and GHA}\label{sec:SCI}
In this section we explain the SCI framework in the context of the
LASSO problem. The purpose of introducing this framework is to ensure
that  Theorem~\ref{thm:fsulcomp}, part~(3)] is proven with as much
generality as possible, taking into account the wide variety of
computational models (e.g. Turing, BSS, Von-Neumann, etc.) that are
prevalent in the optimisation literature. We start by defining the
LASSO feature selection as a `Computational problem' following the
setup of~\cite{opt_big}.

\newcommand{\nh}{\mathsf{NH}}
\begin{definition}[The LASSO computational problem]
\label{definition:LassoComputationalProblem}
For some set $\Omega \subset \real^{m} \times \real^{m \times N}$,
which we call the \emph{input} set, the \emph{LASSO
computational problem on} $\Omega$ is the collection
$\{\Xi,\Omega, \mathbb{B}^N,\Lambda\}$ where $\Xi:\Omega\to2^{\mathbb{B}^N}$
is defined
as in~\eqref{eq:LassoComp} and
\[
 \Lambda = \{f^{\mathrm{vec}}, f^{\mathrm{mat}}\} \text{ with }
 f^{\mathrm{vec}}:\Omega\to\real^m, f^{\mathrm{mat}}:\Omega\to\real^{m\times N}
\]
are defined by $f^{\mathrm{vec}}(y,A) = y$ and
$f^{\mathrm{mat}}(y,A) = A$ for all $(y,A) \in \Omega$.
\end{definition}
\newcommand{\dyadic}{\mathbf{D}}
\newcommand{\pr}{\mathbb{P}}

We want to generalise the LASSO computational problem so that we work
with \emph{inexact inputs}. To do so, 
we will consider the collection
of all functions $f_{n}^{\mathrm{vec}}: \Omega \to \real^m$
and $f_{n}^{\mathrm{mat}}: \Omega \to \real^{m\times N}$ satisfying
\begin{align}\label{eq:Lambda_y}
\|f_{n}^{\mathrm{vec}}(y,A) - y\|_{\infty} &\leq 2^{-n}, \quad  \nmax{f_{n}^{\mathrm{mat}}(y,A)-A} \leq 2^{-n} 
\end{align}
for all $(y,A) \in \Omega$.
To handle inexact input we follow \cite{opt_big} and replace the exact input set, $\Omega$, by the inexact input set
$\tilde{\Omega}$
to form the \emph{inexact LASSO computational problem}.

\begin{definition}[Inexact LASSO computational problem]
\label{definition:Omega_tilde_Delta_m}
The \emph{inexact LASSO computational problem on $\Omega$} (ILCP) is the
quadruple
$
\{\tilde \Xi,\tilde \Omega,\mathbb{B}^N,\tilde \Lambda\},
$
where
\begin{equation}
\begin{split}
\tilde \Omega = \big\{ &\tilde \iota =
\{(f_{n}^{\mathrm{vec}}(\iota),f_{n}^{\mathrm{mat}}(\iota)\}_{n\in\nat}
\mid \iota = (y,A) \in \Omega \text { and }\\
&f_{n}^{\mathrm{vec}}: \Omega \to \real^m,
f_{n}^{\mathrm{mat}}:\Omega\to\real^{m\times N}
\text{ satisfy~\eqref{eq:Lambda_y} respectively}\big\}
\end{split}
\end{equation}
It follows from~\eqref{eq:Lambda_y} that 
there is a unique
$\iota =(y,A)\in\Omega$ for which
\[
 \tilde \iota = \big\{(f_{n}^{\mathrm{vec}}(\iota),
 f_{n}^{\mathrm{mat}}(\iota))\big\}_{n\in\nat}.
\]
We say that this $\iota\in\Omega$ \emph{corresponds} to
$\tilde\iota\in\tilde\Omega$ and we set
$\tilde \Xi: \tilde \Omega \rightrightarrows \mathbb{B}^N$ so
that $\Xi(\tilde \iota) = \Xi(\iota)$, and
$\tilde \Lambda = \{\tilde f_{n}^{\mathrm{vec}},\tilde
f_{n}^{\mathrm{mat}}\}_{n \in \nat}$,
with $\tilde f_{n}^{\mathrm{vec}}(\tilde\iota) = f_{n}(\iota)$,
$\tilde f_{n}^{\mathrm{mat}}(\tilde\iota) = f_{n}^{\mathrm{mat}}(\iota)$
where $\iota$ corresponds to $\tilde \iota$.
\end{definition}

Part~(3) in Theorem~\ref{thm:fsulcomp} is a negative result (it states
that a computational problem cannot be solved). To make it as general
as possible we want it to apply to a broad class of algorithms. These
are defined, roughly speaking, by a single requirement namely, that
for a given $\tilde \iota \in \tilde\Omega$, the algorithm finds any
member of $\tilde \Xi(\tilde \iota)$ by accessing finitely many values
of $\tilde f(\tilde \iota)$ where $\tilde
f \in \tilde \Lambda$. This captures the idea that
any algorithm that solves the ILCP should return an
answer by accessing arbitrarily many (but finitely many)
approximations to the true input. Note, the algorithm should work for any
choice of approximations and any input but it may 
return different results depending on which approximations it sees.

The type of algorithm described above are called \emph{general algorithms}.
As noted in~\cite{opt_big}, the purpose of a general algorithm is to have
a definition that encompasses any model of computation. This ensures 
that Theorem~\ref{thm:fsulcomp}(3) holds in all relevant computational
models considered by the optimisation community (e.g. Turing, BSS).

\begin{definition}[General Algorithms for the ILCP]
\label{definition:Algorithm}
A \emph{general algorithm} for 
$\{\tilde \Xi,\tilde \Omega,\mathbb{B}^N,\tilde \Lambda\}$,
is a mapping 
$\Gamma:\tilde\Omega\to\mathbb{B}^N$
such that, for every $\tilde \iota\in\tilde \Omega$,
the following conditions hold:
\begin{enumerate}[label=(\roman*)]
\item there exists a nonempty subset of evaluations
 $\Lambda_\Gamma(\tilde \iota) \subset\tilde\Lambda $ with 
 $|\Lambda_\Gamma(\tilde \iota)|<\infty$\label{property:AlgorithmFiniteInput},
\item the action 
of $\,\Gamma$ on $\tilde\iota$ is uniquely determined by
 $\{f(\tilde \iota)\}_{f \in \Lambda_\Gamma(\tilde\iota)}$,
 \label{property:AlgorithmDependenceOnInput}
\item for every $\iota^{\prime} \in\Omega$ such that
 $f(\iota^\prime)=f(\tilde\iota)$
 for all $f\in\Lambda_\Gamma(\tilde\iota)$, it holds that
 $\Lambda_\Gamma(\iota^{\prime})=\Lambda_\Gamma(\tilde\iota)$.
 \label{property:AlgorithmSameInputSameInputTaken}
\end{enumerate}
\end{definition}

This will prove useful when we come to define
randomised general algorithms to prove the random setting statement 
in Theorem~\ref{thm:fsulcomp}(3). We can thus proceed to define
randomised general algorithms.

\begin{definition}[Randomised General Algorithm for the ILCP]
\label{definition:ProbablisticAlgorithm}
A \emph{randomised general algorithm} (RGA) for 
$\{\tilde \Xi,\tilde \Omega,\mathbb{B}^N,\tilde \Lambda\}$
is a collection $X$ of general algorithms
$\Gamma:\tilde\Omega\to\mathbb{B}^N$, a sigma-algebra
$\mathcal{F}$ on $X$, and a family of probability measures
$\{\mathbb{P}_{\iota}\}_{\iota \in \Omega}$ on
$\mathcal{F}$ such that the following conditions hold:
\begin{enumerate}[label=(P\roman*)]
\item For each $\iota \in \tilde\Omega$, the mapping
$\gprob_{\iota}:(X,\mathcal{F}) \to (\mathbb{B}^N, \mathcal{B})$
defined by $\gprob_{\iota}(\Gamma) = \Gamma(\iota)$ is a random variable, where
$\mathcal{B}$ is the Borel sigma-algebra on $\mathbb{B}^N$.
\label{property:PAlgorithmMeasurable}
\item For each $n \in \mathbb{N}$ and $\iota \in \tilde\Omega$, we have
$\lbrace \Gamma \in X \, \vert \, T_{\Gamma}(\iota) \leq n \rbrace
\in \mathcal{F}$,
where 
\[
 T_{\Gamma}(\tilde \iota) :=\sup\lbrace m \in \mathbb{N} \, \vert \,
 \text{ either } f_{m}^{\mathrm{vec}}  \in \Lambda_{\Gamma}(\tilde\iota)
 \text{ or } f_{m}^{\mathrm{mat}} \in \Lambda_{\Gamma}(\tilde\iota) \rbrace.
\]

\label{property:PAlgorithmRTMeasurable}

\item For all $\iota_1,\iota_2 \in \tilde\Omega$ and $E \in \mathcal{F}$
so that, for every $\Gamma \in E$ and every $f \in \Lambda_{\Gamma}(\iota_1)$,
we have $f(\iota_1) = f(\iota_2)$, it holds that
$\mathbb{P}_{\iota_1}(E) = \mathbb{P}_{\iota_2}(E)$.
\label{property:PAlgorithmConsistent}
\end{enumerate}
\end{definition}

\begin{remark}
The quantity $T_{\Gamma}(\tilde \iota)$ is known as the minimum amount of
input information in~\cite{opt_big}.
\end{remark}

\begin{definition}[Halting randomised general algorithms]
\label{def:halting_randomised}
A randomised general algorithm $\gprobh$ for a computational problem
$\{\Xi,\Omega,\mathcal{M},\Lambda\}$ is called a
\emph{halting randomised general algorithm} (hRGA) if
$\mathbb{P}_{\iota}(\gprobh_{\iota} = \nh) = 0$, for all $\iota\in\Omega$.
\end{definition}
\newcommand{\opBall}[2]{\mathcal{B}_{#1}{\left(#2\right) }}
\newcommand{\clBall}[2]{\overline{\mathcal{B}}_{#1}{\left(#2\right) }}
\newcommand{\strbdepsph}{\epsilon_{\mathbb{P}h\mathrm{B}}^{\mathrm{s}}(\mathrm{p})}
\newcommand{\strbdepsp}{\epsilon_{\mathbb{P}\mathrm{B}}^{\mathrm{s}}(\mathrm{p})}
\newcommand{\disM}{\mathrm{dist}_{\mathcal{M}}}
\newcommand{\bin}{\mathbb{B}}
We make use of the following propositions, taken from~\cite{opt_big}
and simplified for
the specific problem under consideration in this paper.

\begin{proposition}\label{prop:DrivingNegativeProposition}
(Simplified from~\cite{opt_big}, Proposition 9.5)
Suppose that a subset $\Omega$ of
$\cup_{N=1}^{\infty}\cup_{m=1}^{N} \real^m \times \real^{m \times N}$ is such
that there exists two sequences
$\{\iota^1_n\}_{n=1}^{\infty}, \{\iota^2_n\}_{n=1}^{\infty}\subset \Omega$
and an $\iota^0 \in  \Omega$ satisfying the following conditions:
\begin{enumerate}[leftmargin=8mm, label=(\alph*)]
\item There are disjoint sets $S^1,S^2 \subset \mathbb{B}^N$ with
$\Xi(\iota^i_n)\subset S^i$
for $i=1,2$ where $\Xi$ is defined as in
Definition~\ref{definition:LassoComputationalProblem}.
\label{property:MinimisersOfNonZero}
\item $\iota^i_n = (y^{i,n},A^{i,n})$ and $\iota^0 = (y,A)$ with
$\|y^{i,n} - y\|_{\infty}\leq 1/4^n$ and $\nmax{A^{i,n} - A} \leq 1/4^n$
for all $n \in\mathbb{N}$ and $i=1,2$. \label{property:Del1Info}
\end{enumerate}
Then each of the following holds: 

\begin{enumerate}
\item For any general algorithm $\Gamma: \tilde \Omega \to \bin^N$,
there must exist a
$\tilde \iota  \in \tilde\Omega$ so that
$\Gamma(\tilde{\iota}) \neq \tilde \Xi(\tilde \iota)$.
\item For any $p>0$ and halting randomised general algorithm $\gprobh$,
there must exist $\tilde \iota$ so that the probability that
$\gprobh(\tilde\iota) \neq \tilde \Xi(\tilde \iota)$ is at least $1/2 - p$.
\end{enumerate}
\end{proposition}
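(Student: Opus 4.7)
The plan is to establish both claims via an indistinguishability (adversary) argument that exploits the finite amount of oracle information any general algorithm reads, combined with properties~(ii)--(iii) of Definition~\ref{definition:Algorithm} and, for the randomised case, with property~(P3).

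For part~(1), I argue by contradiction. Choose a representative $\tilde\iota^0\in\tilde\Omega$ of $\iota^0$ whose evaluations lie strictly inside the allowed tolerance, for instance
\[
 \|f_k^{\mathrm{vec}}(\tilde\iota^0)-y\|_\infty\le 2^{-k-2}, \quad
 \nmax{f_k^{\mathrm{mat}}(\tilde\iota^0)-A}\le 2^{-k-2} \text{ for every } k.
\]
By property~(i), $\Lambda_\Gamma(\tilde\iota^0)$ is finite, so the queries have a maximal precision $M$. For any $n$ with $4^{-n}\le 2^{-M-2}$ and $j\in\{1,2\}$, build $\tilde\iota^j_n\in\tilde\Omega$ representing $\iota^j_n$ by setting $f_k(\tilde\iota^j_n):=f_k(\tilde\iota^0)$ for $k\le M$ and any valid approximations for $k>M$; a triangle inequality together with the slack confirms that $f_k(\tilde\iota^0)$ lies within $2^{-k}$ of the entries of $\iota^j_n$, so $\tilde\iota^j_n\in\tilde\Omega$. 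Property~(iii) then forces $\Lambda_\Gamma(\tilde\iota^j_n)=\Lambda_\Gamma(\tilde\iota^0)$ with identical responses, and property~(ii) gives $\Gamma(\tilde\iota^1_n)=\Gamma(\tilde\iota^0)=\Gamma(\tilde\iota^2_n)$; correctness would force this common value to sit in both $\tilde\Xi(\tilde\iota^1_n)\subseteq S^1$ and $\tilde\Xi(\tilde\iota^2_n)\subseteq S^2$, contradicting $S^1\cap S^2=\emptyset$.

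For part~(2), I reuse the same $\tilde\iota^0,\tilde\iota^1_n,\tilde\iota^2_n$. Fix $p>0$; since $\gprobh$ halts, property~(P2) and monotone convergence produce an $M\in\mathbb{N}$ such that the measurable event $E:=\{\Gamma\in X:T_\Gamma(\tilde\iota^0)\le M\}$ satisfies $\mathbb{P}_{\tilde\iota^0}(E)>1-p$, and I take $n$ large as in part~(1). For every $\Gamma\in E$, properties~(ii)--(iii) give $\Gamma(\tilde\iota^j_n)=\Gamma(\tilde\iota^0)$, so defining $F_j:=\{\Gamma:\Gamma(\tilde\iota^j_n)\in\tilde\Xi(\tilde\iota^j_n)\}$, the event $F_j\cap E$ equals $\{\Gamma\in E:\Gamma(\tilde\iota^0)\in\tilde\Xi(\tilde\iota^j_n)\}$. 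The hypothesis of~(P3) holds for both $E$ and $F_j\cap E$ (on each event, $\Lambda_\Gamma(\tilde\iota^j_n)=\Lambda_\Gamma(\tilde\iota^0)$ with matching query values), hence
\[
\mathbb{P}_{\tilde\iota^j_n}(E^c)<p \quad\text{and}\quad \mathbb{P}_{\tilde\iota^j_n}(F_j\cap E)=\mathbb{P}_{\tilde\iota^0}(F_j\cap E).
\]
Disjointness of $\tilde\Xi(\tilde\iota^1_n)$ and $\tilde\Xi(\tilde\iota^2_n)$ makes the two right-hand events disjoint under $\mathbb{P}_{\tilde\iota^0}$, so their probabilities sum to at most $\mathbb{P}_{\tilde\iota^0}(E)\le 1$, giving
\[
\mathbb{P}_{\tilde\iota^1_n}(F_1)+\mathbb{P}_{\tilde\iota^2_n}(F_2)\le 1+2p.
\]
Therefore $\min_j\mathbb{P}_{\tilde\iota^j_n}(F_j)\le 1/2+p$ and the corresponding $\tilde\iota^j_n$ fails with probability at least $1/2-p$, as required.

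The principal obstacle is the randomised step: verifying measurability of $E$ and $F_j\cap E$ in $\mathcal{F}$, and --- most delicately --- justifying that property~(P3) genuinely applies on these intersected events. The latter hinges on the slack engineered into $\tilde\iota^0$, which is precisely what lets the approximations of $\iota^j_n$ replicate those of $\iota^0$ at every precision used by $\Gamma$, so that $\Lambda_\Gamma(\tilde\iota^j_n)$ and $\Lambda_\Gamma(\tilde\iota^0)$ coincide with matching responses on the relevant event. The deterministic step~(1) is the same indistinguishability idea stripped of the measure-theoretic layer.
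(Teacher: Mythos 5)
The paper does not prove this proposition itself but cites it from \cite{opt_big} (Proposition~9.5); your argument is a correct reconstruction of the standard SCI-hierarchy indistinguishability argument underlying that result. Both parts are sound: in part~(1) the slack $2^{-k-2}$ in $\tilde\iota^0$ together with $4^{-n}\le 2^{-M-2}$ gives $2^{-k-2}+4^{-n}\le 2^{-k}$ for $k\le M$, so $\tilde\iota^1_n,\tilde\iota^2_n\in\tilde\Omega$ agree with $\tilde\iota^0$ on all queries $\Gamma$ makes, and properties~(ii)--(iii) force a common output lying in the disjoint sets $S^1,S^2$; in part~(2) the event $E$ is measurable by~(P2), has $\mathbb{P}_{\tilde\iota^0}(E)\to 1$ since each $T_\Gamma$ is finite by~(i), the $F_j$ are measurable by~(P1), (P3) transfers $\mathbb{P}_{\tilde\iota^0}$ to $\mathbb{P}_{\tilde\iota^j_n}$ on $E$ and $F_j\cap E$ because the relevant $\Gamma$ query only up to precision $M$, and the disjointness of $F_1\cap E$ and $F_2\cap E$ under $\mathbb{P}_{\tilde\iota^0}$ gives $\min_j\mathbb{P}_{\tilde\iota^j_n}(F_j)\le 1/2+p$ as needed.
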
		
\bibliographystyle{abbrv}
\bibliography{condReferencesDet}

\end{document}